\providecommand\@dotsep{5}
\def\listtodoname{List of Todos}
\def\listoftodos{\@starttoc{tdo}\listtodoname}
\numberwithin{equation}{section}
\def\R {{\rm I}\hskip -0.85mm{\rm R}}
\def\N {{\rm I}\hskip -0.85mm{\rm N}}
\newcommand{\rig}{\rightarrow}
\newtheorem{theorem}{Theorem}[section]
\newtheorem{proposition}[theorem]{Proposition}
\newtheorem{lemma}[theorem]{Lemma}
\newtheorem{remark}{Remark}
\title[On asymptotically linear elliptic problems]
{On asymptotically linear elliptic problems}
\author[J. R. S. Nascimento]{Jos\'e R. S. Nascimento}
\author[M. T. O. Pimenta]{Marcos T. O. Pimenta}
\author[J. R. Santos Jr.]{Jo\~ao R. Santos J\'unior}
\address[J.R.S. Nascimento]{\newline\indent Faculdade de Matem\'atica
\newline\indent 
Instituto de Ci\^{e}ncias Exatas e Naturais
\newline\indent 
Universidade Federal do Par\'a
\newline\indent
Avenida Augusto corr\^{e}a 01, 66075-110, Bel\'em, PA, Brazil}
\email{\href{mailto:  jrsn19@hotmail.com}{jrsn19@hotmail.com}}
\address[M. T. O. Pimenta]{\newline\indent Departamento de Matem\'atica e Computa\c{c}\~ao
\newline\indent 
Faculdade de Ci\^encias e Tecnologia
\newline\indent 
Universidade Estadual Paulista - UNESP
\newline\indent 
Rua Roberto Simonsen, 305, 19060-900, Presidente Prudente, SP, Brazil }
\email{\href{mailto: marcos.pimenta@unesp.br}{marcos.pimenta@unesp.br}}
\address[J. R. Santos Jr.]{\newline\indent Faculdade de Matem\'atica
\newline\indent 
Instituto de Ci\^{e}ncias Exatas e Naturais
\newline\indent 
Universidade Federal do Par\'a
\newline\indent
Avenida Augusto corr\^{e}a 01, 66075-110, Bel\'em, PA, Brazil}
\email{\href{mailto: joaojunior@ufpa.br }{joaojunior@ufpa.br}}
\thanks{Marcos T.O. Pimenta is partially supported by FAPESP 2021/04158-4, CNPq 303788/2018-6 and FAPDF, Brazil.  Joao R. Santos J\'unior is partially supported by FAPESP 2021/10791-1 and CNPq 313766/2021-5, Brazil.\\ Data sharing not applicable to this article as no datasets were generated or analysed during the current study.}
\subjclass[2010]{ 35J15, 35J25, 35J61}
\keywords{strongly resonant problems, ground state solution, genus theory}
\begin{document}

\maketitle
\begin{abstract}
In this paper we prove the existence of a signed ground state solution in the mountain pass level for a class of asymptotically linear elliptic problems, even when the nonlinearity is just continuous in the second variable. The (strongly) resonant and non-resonant cases are discussed. A multiplicity result is also proved when $f$ is odd with respect to the second variable.
\end{abstract}
\maketitle

\section{Introduction}


We are interested in the existence of ground state and other nontrivial solutions to the following class of semilinear problems
\begin{equation}\label{P}\tag{P}
\left \{ \begin{array}{ll}
-\Delta u = f(x, u) & \mbox{in $\Omega$,}\\
u=0 & \mbox{on $\partial\Omega$,}
\end{array}\right.
\end{equation}
where $\Omega \subset\R^{N}$ is a bounded smooth domain, $N \geq 1$, $f:\Omega\times \R\to\R$ is a Carath\'eodory function which is asymptotically linear at the origin and at infinity, that is,
\begin{equation}\label{positive}
\alpha(x)=\lim_{t\to 0}2F(x, t)/t^{2} \ \mbox{and} \ \eta(x)=\lim_{|t|\to \infty}2F(x, t)/t^{2} \ \mbox{uniformly in $x\in\Omega$}
\end{equation}
are functions in $L^{\infty}(\Omega)$.

\medskip

It is well known in the literature that asymptotically linear problems can be classified as {\it resonant at infinity} (if $\lambda_{m}(\eta)=1$, for some $m\in\N$) or {\it non-resonant at infinity} (if $\lambda_{m}(\eta)\neq 1$, for all $m\in\N$), where, throughout this paper, $\lambda_{m}(\theta)$ denotes the $m$-th eigenvalue of the problem 
\begin{equation}\label{EP}
\left \{ \begin{array}{ll}
-\Delta u = \lambda \theta(x)u & \mbox{in $\Omega$,}\\
u=0 & \mbox{on $\partial\Omega$.}
\end{array}\right.
\end{equation}
In particular, observe that if $\theta(x)=\theta$ is a nontrivial constant, then $\lambda_{m}(\theta)=\lambda_{m}/\theta$, where $\lambda_{m}$ denotes the $m$-th eigenvalue of Laplacian operator with Dirichlet boundary condition. In fact, the resonant case is subdivided depending on how small at infinity is the function 
\begin{equation}\label{res1}
g(x,t) = \eta(x) t - f(x,t).
\end{equation} 
As observed in \cite{BBF}, the smaller $g$ is at infinity, the stronger resonance is. The worst situation is when, for a.e. $x \in \Omega$, 
\begin{equation}\label{res2}
\lim_{|t| \to\infty}g(x,t)= 0 \ \mbox{and} \ \lim_{|t| \to\infty} \int_0^t g(x,s)ds = \beta(x)\not\equiv \infty.
\end{equation} 
In this case, we say that problem \eqref{P} is {\it strongly resonant}. One of the very first works dealing with this situation is \cite{BBF} where, Bartolo, Benci and Fortunato show the existence of multiple solutions for strongly resonant problems in the presence of some symmetry in the autonomous nonlinearity. Their proofs are based on a deformation theorem and pseudo-index theory.

\medskip

Besides \cite{BBF}, there exist so many other works dealing with problem (\ref{P}). Without any intention to be complete, we refer the reader to some papers and references therein. The existence of solution for problem \eqref{P} was investigated under different conditions, for instance, by Ahmad \cite{Ah}, Amann and Zehnder \cite{AZ}, Ambrosetti \cite{Am}, Dancer \cite{Da}, De Figueiredo and Miyagaki \cite{DfM}, Li and Willem \cite{LW1}, Liu and Zou \cite{LZ2} and Struwe \cite{Str1}. Multiplicity results for problem \eqref{P} were also investigated by Li and Willem \cite{LW}, Liu and Zhou \cite{LZ} , Su \cite{Su} and Su and Zhao \cite{SZ}. It is essential to point out that in the great majority of previous references, the nonlinearity $f$ is assumed to be differentiable (or even $C^{1}$) in the second variable, being this assumption crucial in their arguments.

\medskip

P. Bartolo, V. Benci and D. Fortunato \cite{BBF} studied \eqref{P} when $f(x, t)=f(t)$ is a smooth function, satisfying:

\medskip

\begin{enumerate}
\item[$(BBF1)$] $\lim_{|t|\to\infty}(\lambda_{m}t-f(t))t=0$;
\item[$(BBF2)$] $\lim_{t\to\infty}\int_{-\infty}^{t}(\lambda_{m}s-f(s))ds=0$;
\item[$(BBF3)$] $\int_{-\infty}^{t}(\lambda_{m}s-f(s))ds\geq 0$, for all $t\in\R$.
\end{enumerate}

\medskip

\noindent Under these assumptions, the authors were able to prove the existence of solution to \eqref{P}. Observe that $(BBF1)$ implies that $f(t)/t\to \lambda_{m}$ ``faster'' than $t^{2}\to\infty$ as $|t|\to\infty$. More recently, Gongbao Li and Huan-Song Zhou \cite{LZ} relaxed the differentiability of $f$ by considering problem \eqref{P} under the following assumptions:

\medskip

\begin{enumerate}
\item[$(LZ1)$] $f\in C(\overline{\Omega}\times \R, \R)$;
\item[$(LZ2)$] $\lim_{t\to 0}f(x, t)/t=0$ uniformly in $x\in\Omega$;
\item[$(LZ3)$] $\lim_{|t|\to \infty}f(x, t)/t=l$ uniformly in $x\in\Omega$, where $l\in (0, \infty)$ is a constant, or $l=\infty$ and $|f(x, t)|\leq c_{1}+c_{2}|t|^{q-1}$, for some positive constants $c_1, c_{2}$ and $q\in (2, 2^{\ast})$;
\item[$(LZ4)$] $f(x, t)/t$ is nondecreasing in $t\geq 0$ for almost every $x\in\Omega$;
\item[$(LZ5)$] $f(x, -t)=-f(x, t)$ for all $(x, t)\in \Omega\times\R$.
\end{enumerate}

\medskip

\noindent By using a symmetric version of the mountain pass theorem for $C^1$-functionals, the authors were able to prove the following result:
\begin{theorem}
Assume that $f(x, t)$ satisfies conditions $(LZ1)-(LZ3)$ and $(LZ5)$, and $l>\lambda_m$, then the following hold:
\begin{enumerate}
\item[$(i)$]  If $l\in(\lambda_m, \infty)$ is not an eigenvalue of $-\Delta$ with zero Dirichlet data, then problem \eqref{P} has at least $k$ pairs of nontrivial solutions in $H_{0}^{1}(\Omega)$.
\item[$(ii)$] Suppose that condition 
\begin{equation}\label{fF}\tag{fF}
\lim_{|t|\to\infty}\left[(1/2)f(x, t)t-F(x, t)\right]=\infty
\end{equation}
is satisfied, where $F$ is the primitive of $f$. Then the conclusion of $(i)$ holds even if $l=\lambda_m$ is an eigenvalue of $-\Delta$ with zero Dirichlet data.
\item[$(iii)$] If $l=\infty$ in condition $(LZ3)$ and $(LZ4)$ holds, then problem \eqref{P} has infinitely many nontrivial solutions.
\end{enumerate}
\end{theorem}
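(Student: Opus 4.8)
The plan is to set up the problem variationally and apply a symmetric version of the mountain pass theorem (the $\mathbb{Z}_2$-equivariant version, via genus theory or the classical theorem of Ambrosetti–Rabinowitz for even functionals), exploiting that $f(x,-t)=-f(x,t)$ makes the energy functional
\[
J(u)=\frac12\int_\Omega|\nabla u|^2\,dx-\int_\Omega F(x,u)\,dx
\]
even on $H_0^1(\Omega)$. First I would verify the variational framework: under $(LZ1)$–$(LZ3)$, $F$ has at most quadratic growth (when $l<\infty$) or subcritical growth (when $l=\infty$), so $J\in C^1(H_0^1(\Omega),\mathbb{R})$ and its critical points are weak solutions of \eqref{P}. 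The conditions $(LZ2)$ and $(LZ3)$ with $l>\lambda_m$ are exactly what is needed for the geometry: near the origin, $2F(x,t)\le \varepsilon t^2+C_\varepsilon|t|^p$ gives a negative quadratic term dominated by $\|\nabla u\|^2$, so the origin is a strict local minimum on a small sphere; at infinity, choosing a finite-dimensional subspace $V_m=\mathrm{span}\{\varphi_1,\dots,\varphi_m\}$ of eigenfunctions of $-\Delta$ and using $2F(x,t)\ge (l-\varepsilon)t^2-C$ together with $l>\lambda_m$ forces $J\to-\infty$ on $V_m$. This linking/symmetric-geometry is standard and yields a sequence of minimax levels $0<c_1\le c_2\le\cdots\le c_k$.

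The heart of the matter is the Palais–Smale condition, and this is where the three cases bifurcate. In case $(i)$, with $l\notin\sigma(-\Delta)$ finite: a $(PS)_c$ sequence $(u_n)$ is bounded because if $\|u_n\|\to\infty$ then $w_n=u_n/\|u_n\|$ converges weakly to some $w$ satisfying $-\Delta w=l\,w$ with $\|w\|=1$ (using that $f(x,t)/t\to l$ dominated convergence on the rescaled equation), contradicting $l\notin\sigma(-\Delta)$; boundedness plus the compact embedding $H_0^1\hookrightarrow L^2$ and the asymptotically linear growth then give strong convergence along a subsequence by the usual $(S_+)$-type argument on $-\Delta$. In case $(ii)$, $l=\lambda_m$ is an eigenvalue, so the previous contradiction fails and one needs the extra condition \eqref{fF}: if $(u_n)$ were unbounded one shows, after extracting $w_n\rightharpoonup w$ with $w$ a normalized eigenfunction of $-\Delta$ at $\lambda_m$, that $w\ne 0$ a.e.\ on a set of positive measure, hence $|u_n(x)|\to\infty$ there, and then Fatou's lemma applied to $\tfrac12 f(x,u_n)u_n-F(x,u_n)\to+\infty$ contradicts the bound $\tfrac12 J'(u_n)u_n-J(u_n)=\int_\Omega(\tfrac12 f(x,u_n)u_n-F(x,u_n))\,dx=O(1)$. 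In case $(iii)$, $l=\infty$ with the subcritical bound $|f(x,t)|\le c_1+c_2|t|^{q-1}$ and the monotonicity $(LZ4)$: monotonicity of $f(x,t)/t$ gives a weak Ambrosetti–Rabinowitz-type inequality $\tfrac12 f(x,t)t-F(x,t)\ge 0$ and in fact $\to\infty$, so boundedness of $(PS)$ sequences follows, and the subcritical growth gives compactness; moreover $J$ is now even and unbounded below on every finite-dimensional subspace, so the symmetric mountain pass (Rabinowitz) produces an unbounded sequence of critical values, hence infinitely many pairs of solutions.

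Once $(PS)$ is in hand, the conclusions are immediate: for $(i)$ and $(ii)$, the genus-based minimax levels $c_1\le\cdots\le c_k$ are critical values, and a standard argument shows that if several of them coincide the critical set has genus $\ge 2$, hence is infinite, so in all cases one obtains at least $k$ pairs of nontrivial solutions (nontriviality because $c_j>0=J(0)$); for $(iii)$ one gets infinitely many pairs. I expect the main obstacle to be case $(ii)$: establishing that the weak limit $w$ of the rescaled sequence is genuinely nonzero on a positive-measure set (so that condition \eqref{fF} can be activated via Fatou) requires care, because a priori $u_n/\|u_n\|$ could converge weakly to zero; one rules this out by testing the rescaled equation against $w_n$ itself and using $l=\lambda_m>0$ to see $\|w\|_{L^2}$ stays bounded away from $0$. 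The continuity (rather than $C^1$) of $f$ causes no trouble here since $J$ is still $C^1$ under $(LZ1)$–$(LZ3)$; it is only in the authors' own contributions later in the paper that mere continuity becomes delicate.
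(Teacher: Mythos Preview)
This theorem is not proved in the present paper: it is quoted verbatim from Li and Zhou \cite{LZ} as background, and the only information the paper gives about its proof is the single sentence ``By using a symmetric version of the mountain pass theorem for $C^1$-functionals, the authors were able to prove the following result.'' There is therefore no proof here to compare your proposal against in detail.

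That said, your sketch is entirely consistent with this one-line description, and is in fact a faithful outline of the standard Li--Zhou argument: even functional, symmetric mountain-pass geometry built from the eigenspaces $\mathrm{span}\{\varphi_1,\dots,\varphi_m\}$, and a case split on the Palais--Smale condition (non-resonant via $l\notin\sigma(-\Delta)$, resonant via \eqref{fF} and Fatou, superlinear via $(LZ4)$ and the subcritical growth). Your handling of case $(ii)$---showing the rescaled weak limit $w$ is nonzero by testing against $w_n$, then invoking \eqref{fF} on the set where $w\neq 0$---is exactly the mechanism one expects. Nothing in your outline conflicts with what the paper reports; the present paper's own contribution begins only afterward, where \eqref{fF} is replaced by the weaker hypothesis \eqref{finito} and the Nehari-manifold machinery of Sections~\ref{sec:Nehari}--\ref{sec:multiplicity} takes over.
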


\medskip

\noindent Let $g$ be given in \eqref{res1} and $G$ its primitive, since 
$$
(1/2)f(x, t)t-F(x, t)=G(x, t)-(1/2)t g(x, t),
$$
it follows that if $t g(x, t)=t^{2}[\eta(x)-f(x, t)/t]\to 0$ as $|t|\to\infty$, then
\begin{equation}\label{ress}
\lim_{|t|\to\infty}\left[(1/2)f(x, t)t-F(x, t)\right]=\lim_{|t|\to\infty}G(x, t)=\beta(x).
\end{equation}
Consequently, in the resonant case (that is $\lambda_m(\eta)=1$ for some $m$), the limit in \eqref{ress} determines the degree of resonance of problem \eqref{P}. For instance, in \cite{BBF} we have $\beta(x)=\beta\in\R$, thus the problem is strongly resonant. On the other hand, the result for the resonant situation in \cite{LZ} does not cover the strongly resonant case when $f(x, t)/t\to \eta(x)=l=\lambda_m$ ``faster'' than $t^{2}\to\infty$ as $|t|\to\infty$. In fact, since $(fF)$ is assumed, it follows from \eqref{ress} that $\beta(x)\equiv\infty$.

\medskip

In this paper we complement previous results. In fact, in the sequel, we suppose that $f$ satisfies the following hypotheses:

\medskip

\begin{itemize}
\item[$(f_1)$] $t\mapsto f(x, t)/|t|$ is increasing (a. e. in $\Omega$) and $\alpha^{+}, \eta^{+}\neq 0$;
\item[$(f_{2})$] $\lambda_{m}(\eta)<1<\lambda_{1}(\alpha)$, for some $m\geq 1$,   
\end{itemize}

\medskip

\noindent where $\alpha^{+}$ and $\eta^{+}$ are the positive parts of functions $\alpha$ and $\beta$ defined in \eqref{positive}. It is important to point out that, by $\alpha^{+}, \eta^{+}\neq 0$ and \cite{DeFig}, there exist the positive eigenvalues $\lambda_{m}(\eta)$ and $\lambda_{1}(\alpha)$. 

\medskip

Assuming $(f_1)-(f_2)$ we have provided existence of ground state solution and multiplicity for \eqref{P} in both cases: the non-resonant case $(NRC)$ and resonant case $(RC)$ (see Theorem \ref{teo1}), that is:

\medskip

\begin{enumerate}
\item[$(NRC)$] $\lambda_{m+k}(\eta)\neq 1$ for all $k\in\N$,
\item[$(RC)$] $\lambda_{m+k}(\eta)= 1$ for some $k\in\N$.
\end{enumerate}  

\medskip

\noindent In particular, since in the case $(RC)$ we have not assumed hypothesis $(fF)$, our result cover strongly resonant nonlinearities, even when $f(x, t)/t\to \lambda_m$ ``faster'' than $t^{2}\to\infty$ as $|t|\to\infty$, see Section \ref{sec:assump}.

\medskip

Finally, replacing assumptions $(f_{1})-(f_{2})$ by
\begin{itemize}
\item[($f_1'$)] $\sup_{t\in A}|F(., t)/t^2|_\infty<\infty$, for each bounded set $A\subset\R$,  and $\alpha^{+}, \eta^{+}\neq 0$;
\item[$(f_2')$] $\lambda_{m}(\alpha)<1<\lambda_{1}(\eta)$, for some $m\geq 1$,
\end{itemize}
we still have been able to prove the existence of multiple solution for \eqref{P}, see Theorem \ref{teo3}.

\medskip

In Theorems \ref{teo1} and \ref{teo3}, some progresses are obtained regarding the previous works. In what follows, we enumerate the main contributions: (1) Condition \eqref{fF} is not required. Instead, we are imposing condition \eqref{finito}, which is certainly weaker than \eqref{fF}. In our best knowledge, condition \eqref{finito} has not appeared in previous papers and allows us to study the existence of ground state solutions for some classes of strongly resonant problems which had not been treated. In a first moment, due to the presence of the number $\tau_{m}$, whose dependence on $f$ is not so explicit, assumption \eqref{finito} may seem difficult to be checked. In order to ensure its viability, we provide in the section \ref{sec:assump} a concrete problem \eqref{P} for which \eqref{finito} holds and \eqref{fF} is not verified; (2) Since $\alpha$, $\eta$ and $\beta$ depend on $x$ and $f$ is not differentiable, our assumptions are more general than a large part of previous papers (which usually require these functions be constants); (3) We provide an unified approach to deal concurrently the non-resonant case, the strong and the non-strong resonant cases.

\medskip

Our approach is based on the Nehari method, which consists in minimizing the energy functional $I$ over the so called Nehari manifold $\mathcal{N}$, a set which contains all the nontrivial solutions of the problem. Although this method has been carefully treated by A. Szulkin and T. Weth \cite{SW} for the case of nonlinearities which satisfies superquadraticity conditions at infinity, it is not a trivial task to apply it for problems involving asymptotically linear nonlinearities. In order to cite the main difficulties, we point out that the method consists in proving the existence of a homeomorphism $\gamma$ between $\mathcal{N}$ and a submanifold $\mathcal{M}$ of $H_{0}^{1}(\Omega)$. Despite the absence of a differentiable structure in $\mathcal{N}$, such a homeomorphism allows us to define a $C^1$-functional $\Psi$ on $\mathcal{M}$ with very useful properties. However, differently of the problem for superquadract nonlinearities, in which $\mathcal{M}$ is the unit sphere $\mathcal{S}$ of $H_{0}^{1}(\Omega)$, for asymptotically linear nonlinearities, it is not exactly clear who is the suitable manifold $\mathcal{M}$. In fact, after a careful study (see Lemma \ref{lemma1}, Propositions \ref{proposição1} and \ref{proposition2}) we are able to prove that $\mathcal{M}=\mathcal{S}_{\mathcal{A}}:=\mathcal{S}\cap\mathcal{A}$ is a noncomplete submanifold of $H_{0}^{1}(\Omega)$, where $\mathcal{A}:=\left\{u\in H_{0}^{1}(\Omega): \|u\|^{2}<\int_{\Omega}\eta(x)u^{2}dx\right\}$. This fact brings additional problems. Indeed, it is important to assure that minimizing sequences $\{u_{n}\}$ for $\Psi$ are not near the boundary of $\mathcal{S}_{\mathcal{A}}$. In \cite{SW}, this step is strongly based in the fact that $f$ has a superquadratic growth at infinity, what implies that $\{\Psi(u_{n})\}$ tends to infinity as the distance from $\{u_{n}\}$ to the boundary tends to zero. In our case, the behaviour of $\{\Psi(u_{n})\}$ at infinity, as $dist(u_{n}, \partial \mathcal{S}_{\mathcal{A}})\to 0$, is indefinite. This fact makes difficult, for example, to know how to extend $\Psi$ to $\overline{\mathcal{S}}_{\mathcal{A}}$ in order to apply the Ekeland variational principle, which is crucial to prove that $\{u_{n}\}$ can be seen as Palais-Smale sequence. 

\medskip

%

The paper is organized as follows. In Section \ref{sec:prelim} we present the variational background. In Section \ref{sec:Nehari} we deeply study the Nehari manifold and some of its topological features. In Section \ref{sec:multiplicity} we state and prove our main results. Finally, in Section \ref{sec:assump}, hypothesis \eqref{finito} is discussed in a concrete problem.

\medskip

\section{Preliminaries}\label{sec:prelim}

Our main goal in this section is to introduce some variational background for \eqref{P}. We start denoting by $I:H_{0}^{1}(\Omega)\to\R$ the energy functional associated to problem \eqref{P}, given by
$$
I(u)=\frac{1}{2}\|u\|^{2}-\int_{\Omega}F(x, u)dx,
$$
where $\displaystyle \|u\|^{2} = \int_\Omega |\nabla u|^2 dx$ and $\displaystyle  F(x, t)=\int_{0}^{t}f(x, s)ds$. It is well known that $I\in C^{1}(H_{0}^{1}(\Omega), \R)$ and
$$
I'(u)\varphi=\int_{\Omega}\nabla u\nabla \varphi dx-\int_{\Omega}f(x, u)\varphi dx.
$$
Thus, critical points of $I$ are weak solutions of \eqref{P}.

\medskip

The Nehari manifold associated to the functional $I$ is the set
$$
\mathcal{N}=\{u\in H_{0}^{1}(\Omega)\backslash\{0\}:\|u\|^{2}=\int_{\Omega}f(x, u)udx\}.
$$
Since $f$ is just a Carath\'eodory function, we cannot ensure that $\mathcal{N}$ is a smooth manifold. Moreover, $\mathcal{S}$ denotes the unit sphere in $H_{0}^{1}(\Omega)$
and
$$
\mathcal{A}:=\left\{u\in H_{0}^{1}(\Omega): \|u\|^{2}<\int_{\Omega}\eta(x)u^{2}dx\right\}.
$$ 

\medskip

Now, it is important to fix some notation. Throughout this paper we denote by $e_{j}$ a normalized (in $H_{0}^{1}(\Omega)$ norm) eigenfunction associated to $\lambda_{j}(\eta)$ and use the symbol $[u\neq 0]$ to denote the set $\{x\in\Omega: u(x)\neq 0\}$. Moreover, $|A|$ will always denote the Lebesgue measure of a measurable set $A\subset\R^{N}$, $S(\Omega)$ and $|\theta|_{\infty}$ denote, respectively, the best constant of the continuous embedding from $H_{0}^{1}(\Omega)$ into $L^{2^{\ast}}(\Omega)$ and the $L^{\infty}$-norm of a function $\theta$, $\chi(\theta)=\sum_{j=1}^{m}dim V_{\lambda_{j}(\theta)}$ is the sum of the dimensions of the first $m$ eigenspaces $V_{\lambda_{j}(\theta)}$ associated to \eqref{EP}. Finally, $\mathcal{S}_{\chi(\theta)}$ denotes the unit sphere of $\oplus_{k=1}^{m}V_{\lambda_{k}(\theta)}$ and $\theta^{+}$ denotes the positive part of a function $\theta$.

\medskip

\begin{lemma}\label{lemma1}
Suppose that $f$ satisfies $(f_{1})-(f_{2})$. Then, the following claims hold:
\begin{enumerate}
\item[$(i)$] The set $\mathcal{A}$ is open and nonempty;
\item[$(ii)$] $\partial \mathcal{A}=\{u\in H_{0}^{1}(\Omega): \|u\|^{2}=\int_{\Omega}\eta(x)u^{2}dx\}$;
\item[$(iii)$] $\mathcal{A}^{c}=\{u\in H_{0}^{1}(\Omega): \|u\|^{2}\geq\int_{\Omega}\eta(x)u^{2}dx\}$;
\item[$(iv)$] $\mathcal{N}\subset \mathcal{A}$;
\item[$(v)$] $\mathcal{S}\cap\mathcal{A}\neq\emptyset$.
\end{enumerate}
\end{lemma}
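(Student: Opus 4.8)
The plan is to prove the five claims essentially in the order stated, since the later ones rely on the earlier ones. The key inputs are the assumptions $(f_1)$--$(f_2)$, the variational characterization of the eigenvalues $\lambda_j(\eta)$, and the compact Sobolev embedding.

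For $(i)$, openness of $\mathcal{A}$ follows because the map $u\mapsto \|u\|^2-\int_\Omega\eta(x)u^2\,dx$ is continuous on $H_0^1(\Omega)$ (the quadratic form $u\mapsto\int_\Omega\eta(x)u^2\,dx$ is continuous by the embedding $H_0^1(\Omega)\hookrightarrow L^2(\Omega)$ together with $\eta\in L^\infty(\Omega)$), and $\mathcal{A}$ is the preimage of $(-\infty,0)$. For nonemptiness I would use $(f_2)$, which gives $\lambda_m(\eta)<1$ for some $m\geq 1$: since $\eta^+\not\equiv 0$ the eigenvalue $\lambda_m(\eta)$ exists and is positive, and if $e$ is a normalized eigenfunction associated to $\lambda_1(\eta)\leq\lambda_m(\eta)<1$ then $\int_\Omega\eta(x)e^2\,dx=1/\lambda_1(\eta)>1=\|e\|^2$, so $e\in\mathcal{A}$. (Alternatively, take any $u$ in the span of the first $m$ eigenspaces: on that subspace $\|u\|^2\leq\lambda_m(\eta)\int_\Omega\eta(x)u^2\,dx<\int_\Omega\eta(x)u^2\,dx$ provided $\int_\Omega\eta(x)u^2\,dx>0$, which holds because on that finite-dimensional space the form is positive definite.) Parts $(ii)$ and $(iii)$ are immediate topological consequences: for a continuous function $\Phi(u):=\|u\|^2-\int_\Omega\eta(x)u^2\,dx$ whose sublevel set $\{\Phi<0\}$ is $\mathcal{A}$, one has $\partial\mathcal{A}\subset\{\Phi=0\}$ and $\mathcal{A}^c=\{\Phi\geq 0\}$; the reverse inclusion $\{\Phi=0\}\subset\partial\mathcal{A}$ needs a tiny argument — given $u$ with $\Phi(u)=0$, the rescalings $su$ for $s$ slightly less than $1$ satisfy $\Phi(su)=s^2\Phi(u)=0$, which is not quite enough, so instead I would perturb in the direction of the eigenfunction $e$ above: $\Phi(u+\varepsilon e)$ can be made negative for small $\varepsilon>0$ when, say, the cross term is handled, while $\Phi((1+\varepsilon)u)=(1+\varepsilon)^2\Phi(u)=0$ keeps us on the zero set; more robustly, since $\mathcal{A}\neq\emptyset$ is open and $\{\Phi<0\}$, $\{\Phi>0\}$ are both nonempty open sets (the latter contains, e.g., any eigenfunction of $-\Delta$ with eigenvalue large enough, or any function with $\int_\Omega\eta u^2\leq 0$), connectedness of $H_0^1(\Omega)$ forces $\{\Phi=0\}$ to be exactly the common boundary.

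For $(iv)$, let $u\in\mathcal{N}$, so $\|u\|^2=\int_\Omega f(x,u)u\,dx$. The point is to compare $f(x,t)t$ with $\eta(x)t^2$. By $(f_1)$ the map $t\mapsto f(x,t)/|t|$ is increasing, hence $f(x,t)/t$ is increasing in $|t|$ on each half-line, and therefore $f(x,t)t = t^2\cdot\big(f(x,t)/t\big) < t^2\lim_{|s|\to\infty} f(x,s)/s = \eta(x)t^2$ for every $t\neq 0$, with strict inequality because the limit is a supremum not attained at finite $t$. Integrating over $[u\neq 0]$ (the integrand vanishes where $u=0$) gives $\|u\|^2=\int_\Omega f(x,u)u\,dx<\int_\Omega\eta(x)u^2\,dx$, i.e.\ $u\in\mathcal{A}$. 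One subtlety to address: one must check the strict inequality survives integration, which it does because $|[u\neq 0]|>0$ for $u\neq 0$ and the pointwise inequality is strict there; I should also make sure $\int_\Omega\eta(x)u^2\,dx$ is finite and the monotonicity limit is indeed $\eta(x)$ uniformly, which is exactly \eqref{positive}. Finally $(v)$ follows from $(i)$ and $(iv)$ together with the fact that $\mathcal{N}\neq\emptyset$ — but to avoid circularity (the nonemptiness of $\mathcal{N}$ may only be established later), I would instead argue directly: take the eigenfunction $e$ from $(i)$ with $\|e\|=1$ and $e\in\mathcal{A}$; then $e\in\mathcal{S}\cap\mathcal{A}$. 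So $(v)$ is really a corollary of the construction in $(i)$.

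\textbf{Main obstacle.} The genuinely substantive step is $(iv)$, $\mathcal{N}\subset\mathcal{A}$: it is the one place where the monotonicity hypothesis $(f_1)$ is used in an essential, non-formal way, and one must be careful that $(f_1)$ (increasing $f(x,t)/|t|$) really does yield the pointwise bound $f(x,t)t<\eta(x)t^2$ with the correct strictness — in particular that the asymptotic slope $\eta(x)$ is approached from below. Everything else ($(i)$--$(iii)$, $(v)$) is soft topology plus the existence of the eigenvalues guaranteed by $(f_2)$ and $\alpha^+,\eta^+\not\equiv 0$.
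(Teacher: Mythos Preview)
Your proposal is correct and follows essentially the same route as the paper: openness via continuity of $u\mapsto\|u\|^2-\int_\Omega\eta(x) u^2\,dx$, nonemptiness and $(v)$ via eigenfunctions for $\lambda_j(\eta)$ with $j\leq m$, and $(iv)$ via the strict pointwise bound $f(x,t)/t<\eta(x)$ coming from $(f_1)$. On $(ii)$ you are actually more careful than the paper (which calls it ``immediate''); your perturbation toward an eigenfunction $e\in\mathcal{A}$ is the right way to obtain $\{\Phi=0\}\subset\partial\mathcal{A}$ --- either the cross term $B(u,e)\neq 0$ and you choose the sign of $\varepsilon$, or $B(u,e)=0$ and the $\varepsilon^2\Phi(e)$ term wins --- whereas the connectedness remark you append afterward does not by itself give that inclusion and should be dropped.
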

\begin{proof} ($i$) By $(f_{2})$, if $u$ is an eigenfunction associated to $\lambda_{j}(\eta)$, for some $j\in\{1, \ldots, m\}$, then $u$ belongs to $\mathcal{A}$. Moreover, $\mathcal{A}=\varphi^{-1}(-\infty, 0)$, where $\varphi:H_{0}^{1}(\Omega)\to \mathbb{R}$ is the continuous function $\varphi(u)=\|u\|^{2}-\int_{\Omega}\eta(x)u^{2}dx$. Items ($ii$)-($iii$) are immediate consequences of the definition of $\mathcal{A}$.

\medskip

 ($iv$) If $u\in\mathcal{N}$ then, 
$$
\|u\|^{2}=\int_{[u\neq 0]}\left[\frac{f(x, u)}{u}\right]u^{2}dx.
$$
By $(f_{1})$, we conclude  
$$
\|u\|^{2}<\int_{\Omega}\eta(x)u^{2}dx.
$$
Showing that $u\in \mathcal{A}$.

\medskip

($v$) It is enough to choose an eigenfunction $e_{j}$ associated to $\lambda_{j}(\eta)$ and normalized in $H_{0}^{1}(\Omega)$,  whatever $j\in\{1, \ldots, m\}$. For sure, we have $e_{j}\in\mathcal{S}\cap\mathcal{A}$.
\end{proof}

\medskip

In what follows, we will denote $\mathcal{S}_{\mathcal{A}}:=\mathcal{S}\cap\mathcal{A}$. Since $\mathcal{S}$ is a $C^{1}$-manifold of $H^{1}_{0}(\Omega)$ and, by Lemma $\ref{lemma1}$, $\mathcal{A}$ is open set of $H^{1}_{0}(\Omega)$ whose boundary has intersection with $\mathcal{S}$, it follows that $\mathcal{S}_{\mathcal{A}}$ is a noncomplete $C^{1}$-manifold of $H^{1}_{0}(\Omega)$. Moreover, from $(ii)$ and $(iii)$, it is clear that $\partial \mathcal{S}_{\mathcal{A}}=\{u\in \mathcal{S}: 1=\int_{\Omega}\eta(x)u^{2}dx\}$ and $\mathcal{S}_{\mathcal{A}}^{c}=\{u\in \mathcal{S}: 1\geq\int_{\Omega}\eta(x)u^{2}dx\}$.

\medskip

The following property of functions in $\partial\mathcal{S}_{\mathcal{A}}$ plays an important role in the existence of solution for \eqref{P}, see Lemmas \ref{level} and \ref{minimax}.

\medskip

\begin{lemma}\label{limita}
The following inequality holds true:
$$
\inf_{u\in \partial\mathcal{S}_{\mathcal{A}}}|[u\neq 0]|\geq \left( S(\Omega)/|\eta|_{\infty}\right)^{N/2}.
$$
\end{lemma}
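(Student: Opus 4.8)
The plan is to exploit the defining constraint $1=\int_\Omega \eta(x)u^2\,dx$ for $u\in\partial\mathcal{S}_{\mathcal{A}}$ together with the Sobolev embedding $H_0^1(\Omega)\hookrightarrow L^{2^*}(\Omega)$, and bound the right-hand integral above by a quantity involving $|\eta|_\infty$, the measure of the support $[u\neq 0]$, and the $L^{2^*}$-norm of $u$. The key observation is that $\eta$ vanishes outside its essential support, but more importantly that the integrand is only nonzero on $[u\neq 0]$, so Hölder's inequality should be applied on that set rather than on all of $\Omega$.

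Concretely, first I would write, for any $u\in\partial\mathcal{S}_{\mathcal{A}}$,
\[
1=\int_\Omega \eta(x)u^2\,dx=\int_{[u\neq 0]}\eta(x)u^2\,dx\leq |\eta|_\infty\int_{[u\neq 0]}u^2\,dx.
\]
Next I would apply Hölder's inequality on the set $[u\neq 0]$ with exponents $2^*/2$ and its conjugate $(2^*/2)'=N/2$ (using $2^*=2N/(N-2)$ for $N\geq 3$; the cases $N=1,2$ are handled separately since then the embedding into any $L^q$ is available with an even more favorable exponent, or one argues directly), obtaining
\[
\int_{[u\neq 0]}u^2\,dx\leq \Big(\int_{[u\neq 0]}|u|^{2^*}\,dx\Big)^{2/2^*}\,|[u\neq 0]|^{2/N}\leq \Big(\int_{\Omega}|u|^{2^*}\,dx\Big)^{2/2^*}\,|[u\neq 0]|^{2/N}.
\]
Then, by the definition of the best Sobolev constant $S(\Omega)$ and the fact that $\|u\|^2=1$ on $\mathcal{S}$, we have $\big(\int_\Omega |u|^{2^*}\,dx\big)^{2/2^*}\leq S(\Omega)^{-1}\|u\|^2 = S(\Omega)^{-1}$ (here I am using the normalization of $S(\Omega)$ as the embedding constant in the form $S(\Omega)\|u\|_{2^*}^2\le\|u\|^2$; one must be careful to match the paper's convention). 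Combining the three displays gives
\[
1\leq \frac{|\eta|_\infty}{S(\Omega)}\,|[u\neq 0]|^{2/N},
\]
hence $|[u\neq 0]|\geq \big(S(\Omega)/|\eta|_\infty\big)^{N/2}$, and taking the infimum over $u\in\partial\mathcal{S}_{\mathcal{A}}$ yields the claim.

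The only real subtlety, and the place I would be most careful, is bookkeeping of constants and exponents: matching the precise normalization of $S(\Omega)$ used in the paper (whether it is defined via $\|u\|^2\ge S(\Omega)\|u\|_{2^*}^2$ or its reciprocal), and correctly identifying the Hölder conjugate of $2^*/2$ as $N/2$ so that the exponent $N/2$ in the final bound appears naturally. A secondary point is the dimensional restriction: the argument as written uses $2^*<\infty$, i.e. $N\geq 3$; for $N=1,2$ one has $H_0^1(\Omega)\hookrightarrow L^q(\Omega)$ for all $q<\infty$ (indeed $L^\infty$ for $N=1$), and essentially the same Hölder argument—or a direct estimate—gives an even stronger lower bound on $|[u\neq 0]|$, so the stated inequality still holds a fortiori. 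No deformation or compactness input is needed; this is purely a Hölder–Sobolev estimate.
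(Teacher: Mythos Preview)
Your proof is correct and follows essentially the same route as the paper: restrict the integral to $[u\neq 0]$, bound $\eta$ by $|\eta|_\infty$, apply H\"older with exponents $2^*/2$ and $N/2$, and then invoke the Sobolev embedding together with $\|u\|=1$. The paper's argument is line-for-line the same, only slightly terser (it does not separate the cases $N\le 2$ or discuss the normalization of $S(\Omega)$).
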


\begin{proof}
By using H\"{o}lder inequality, it follows that, for each $u\in \partial\mathcal{S}_{\mathcal{A}}$
$$
1\leq |\eta|_{\infty}\int_{[u\neq 0]}u^{2}dx\leq |\eta|_{\infty}|u|_{2^{\ast}}^{2}|[u\neq 0]|^{2/N}.
$$
By continuous Sobolev embedding from $H_{0}^{1}(\Omega)$ into $L^{2^{\ast}}(\Omega)$, 
$$
1\leq |\eta|_{\infty}(1/S(\Omega))|[u\neq 0]|^{2/N}.
$$
Therefore,
$$
|[u\neq 0]|\geq (S(\Omega)/|\eta|_{\infty})^{N/2}, \ \forall \ u\in \partial\mathcal{S}_{\mathcal{A}}.
$$
The result is proved.
\end{proof}

\medskip

Next lemma provides some consequences of hypothesis $(f_{1})$ which will be useful later on.

\medskip

\begin{lemma}\label{lemma2}
Suppose that $(f_{1})$ holds. Then, a.e. in $\Omega$,
\begin{enumerate}
\item[$(i)$] $t\mapsto (1/2)f(x, t)t-F(x, t)$ is increasing in $(0,\infty)$ and decreasing in $(-\infty, 0)$;
\item[$(ii)$] $t\mapsto F(x, t)/t^{2}$ is increasing in $(0,\infty)$ and decreasing in $(-\infty, 0)$;
\item[$(iii)$] $f(x, t)/t>2F(x, t)/t^{2}$ for all $t\in \R\backslash\{0\}$.
\end{enumerate}
\end{lemma}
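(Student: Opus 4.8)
The plan is to derive everything from the single hypothesis $(f_1)$, namely that $t\mapsto f(x,t)/|t|$ is increasing (a.e.\ in $\Omega$). Fix a.e.\ $x\in\Omega$ and work pointwise. The first observation I would record is that $(f_1)$ forces $f(x,0)=0$ (otherwise $f(x,t)/|t|$ would blow up near $0$ with both signs, contradicting monotonicity), so that $F(x,t)=\int_0^t f(x,s)\,ds$ is well defined and $F(x,0)=0$, and also that for $t>0$ the map $s\mapsto f(x,s)/s$ is increasing on $(0,\infty)$ while for $t<0$ the map $s\mapsto f(x,s)/s=-f(x,s)/|s|$ is decreasing on $(-\infty,0)$. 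These two one-sided monotonicities of $f(x,s)/s$ are the real engine.

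For item $(ii)$, I would compute, for $t>0$,
\[
\frac{d}{dt}\frac{F(x,t)}{t^2}=\frac{f(x,t)t^2-2tF(x,t)}{t^4}=\frac{f(x,t)t-2F(x,t)}{t^3},
\]
so $(ii)$ on $(0,\infty)$ is equivalent to the claim that $h(x,t):=\tfrac12 f(x,t)t-F(x,t)$ is nonnegative (in fact positive) there, which is basically item $(i)$ at level $0$. Rather than differentiate (since $f$ is only Carath\'eodory, not $C^1$), I would argue directly: for $0<s<t$, using that $f(x,\cdot)/(\cdot)$ is increasing on $(0,\infty)$,
\[
F(x,t)=\int_0^t f(x,s)\,ds=\int_0^t \frac{f(x,s)}{s}\,s\,ds<\frac{f(x,t)}{t}\int_0^t s\,ds=\frac{f(x,t)t}{2},
\]
which gives $(iii)$ for $t>0$ at once, and the analogous computation on $(-\infty,0)$ (where $f(x,s)/s$ is decreasing, and $s<0$) gives $f(x,t)/t>2F(x,t)/t^2$ there as well — so $(iii)$ is handled completely, and it also yields $h(x,t)>0$ for all $t\ne 0$. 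To upgrade $(iii)$ into the \emph{monotonicity} statements $(i)$ and $(ii)$, I would take $0<s<t$ and estimate the increment: write
\[
h(x,t)-h(x,s)=\frac12\bigl(f(x,t)t-f(x,s)s\bigr)-\int_s^t f(x,\tau)\,d\tau,
\]
and bound $\int_s^t f(x,\tau)\,d\tau=\int_s^t \tfrac{f(x,\tau)}{\tau}\tau\,d\tau$ between $\tfrac{f(x,s)}{s}\cdot\tfrac{t^2-s^2}{2}$ and $\tfrac{f(x,t)}{t}\cdot\tfrac{t^2-s^2}{2}$ using monotonicity of $f(x,\tau)/\tau$ on $[s,t]\subset(0,\infty)$; substituting the upper bound $\int_s^t f\le \tfrac{f(x,t)}{t}\cdot\tfrac{t^2-s^2}{2}$ gives
\[
h(x,t)-h(x,s)\ \ge\ \frac12 f(x,t)t-\frac12 f(x,s)s-\frac{f(x,t)}{t}\cdot\frac{t^2-s^2}{2}
=\frac{s}{2}\Bigl(\frac{f(x,t)}{t}-\frac{f(x,s)}{s}\Bigr)s\ \ge 0,
\]
so $h(x,\cdot)$ is (strictly, when the ratio is strictly increasing) increasing on $(0,\infty)$; the sign-flipped computation on $(-\infty,0)$ shows $h(x,\cdot)$ is decreasing there, giving $(i)$. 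Then $(ii)$ follows from the display above relating $\tfrac{d}{dt}(F/t^2)$ to $h$ — or, to avoid differentiability, directly: for $0<s<t$,
\[
\frac{F(x,t)}{t^2}-\frac{F(x,s)}{s^2}=\int_s^t \frac{f(x,\tau)\tau-2F(x,\tau)}{\tau^3}\,d\tau=\int_s^t \frac{2h(x,\tau)}{\tau^3}\,d\tau>0
\]
once we know $F(x,\cdot)$ is locally absolutely continuous, which it is as a primitive of the locally bounded (by $(f_1)$) function $f(x,\cdot)$; the symmetric argument on $(-\infty,0)$ completes $(ii)$.

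The main obstacle I anticipate is purely a matter of regularity bookkeeping rather than genuine difficulty: $f(x,\cdot)$ is only Carath\'eodory, so I must be careful that the ratio $f(x,s)/s$ is genuinely monotone (hence, being monotone, automatically measurable and locally bounded away from $s=0$ on each side), that $f(x,0)=0$ so $F$ is a bona fide primitive with $F(x,0)=0$, and that all the integrals $\int_s^t (f(x,\tau)/\tau)\tau\,d\tau$ are legitimate Lebesgue integrals to which the monotone sandwich bounds apply. Once these points are nailed down, the proof is just the three displayed sandwich estimates above, run once on $(0,\infty)$ and once, with the sign of $s$ reversed, on $(-\infty,0)$; the strictness in $(iii)$ comes from $f(x,s)/s$ being \emph{strictly} less than $f(x,t)/t$ on a set of positive measure in $(0,t)$ whenever $t\ne 0$, which is exactly what "increasing" in $(f_1)$ provides.
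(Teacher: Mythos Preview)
Your proof is correct and follows essentially the same approach as the paper: both arguments hinge on the sandwich estimate $\int_{t_2}^{t_1}\tfrac{f(x,s)}{s}\,s\,ds<\tfrac{f(x,t_1)}{t_1}\cdot\tfrac{t_1^2-t_2^2}{2}$ for $0<t_2<t_1$, which yields the monotonicity of $h(x,t)=\tfrac12 f(x,t)t-F(x,t)$ directly. The paper proves $(i)$ first and simply states that $(ii)$ and $(iii)$ follow, whereas you order things as $(iii)\to(i)\to(ii)$ and add some regularity bookkeeping, but the substance is identical.
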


\medskip

\begin{proof}
$(i)$ Without loss of generality, we can suppose $t_{1}>t_{2}>0$. Then, a.e. in $\Omega$,
\begin{eqnarray*}
\frac{1}{2}f(x, t_{1})t_{1}-F(x, t_{1}) &=&\frac{1}{2}f(x, t_{1})t_{1}-F(x, t_{2})-\int_{t_{2}}^{t_{1}}\left[\frac{f(x, s)}{s}\right]sds\\
&>&\frac{1}{2}f(x, t_{1})t_{1}-F(x, t_{2})-\frac{f(x, t_{1})}{t_{1}}\int_{t_{2}}^{t_{1}}sds\\
&=& \frac{1}{2}f(x, t_{1})t_{1}-F(x, t_{2})-\frac{f(x, t_{1})}{t_{1}}\frac{(t_{1}^{2}-t_{2}^{2})}{2}\\
&=&\frac{f(x, t_{1})}{t_{1}}\frac{t_{2}^{2}}{2}-F(x, t_{2})\\
&> &\frac{1}{2}f(x, t_{2})t_{2}-F(x, t_{2}),
\end{eqnarray*}
where it was used $(f_{1})$ in the last two inequalities. The other case is analogous. Items $(ii)$ and $(iii)$ follows from $(i)$.
\end{proof}

\medskip

From Lemma \ref{lemma2}$(i)$, the maps $\beta:\Omega\to(0,\infty]$ given by
\begin{equation}\label{beta}
\beta(x)=\lim_{|t|\to\infty}\left[(1/2)f(x, t)t-F(x, t)\right]
\end{equation}
is well defined, and, it might not be integrable.

\section{Topological aspects of the Nehari manifold}\label{sec:Nehari}

The main goal of this section is to study some topological features of the Nehari manifold under hypotheses $(f_1)-(f_2)$ and the behaviour of the energy functional $I$ on $\mathcal{N}$.

\medskip

\begin{proposition}\label{proposição1}
Suppose that $f$ satisfies $(f_1)-(f_{2})$ and let $h_{u}:[0,\infty)\rightarrow \R$ be defined by
$h_{u}(t)=I(tu)$.

\medskip

\begin{enumerate}
\item[$(i)$] For each $u\in \mathcal{A}$, there exists a unique $t_{u}>0$ such that $h_{u}'(t)>0$ in $(0,t_{u})$, $h_{u}'(t_{u})=0$ and $h_{u}'(t)<0$ in $(t_{u}, \infty)$. Moreover, $tu\in \mathcal{N}$ if, and only if, $t=t_{u}$;

\item[$(ii)$] for each $u\in \mathcal{A}^{c}$, $h_{u}'(t)>0$ for all $t\in (0, \infty)$.
\end{enumerate}
\end{proposition}

\begin{proof} 

\medskip

($i$) First observe that $h_{u}(0)=0$. Moreover, for each $u\in \mathcal{A}$, we have
\begin{equation}
\frac{h_{u}(t)}{t^{2}}= \frac{1}{2}\|u\|^{2}-\int_{[u\neq 0]}\left[\frac{F(x, tu)}{(tu)^{2}}\right]u^{2}dx.
\end{equation}
Thus, from $(f_{1})-(f_{2})$, L'Hospital rule and Lebesgue Dominated Convergence Theorem, it follows that
$$
\lim_{t\to 0}\frac{h_{u}(t)}{t^{2}}=\frac{1}{2}\left(\|u\|^{2}-\int_{\Omega}\alpha(x)u^{2}dx\right)>0
$$
and
$$
\lim_{t\to \infty}\frac{h_{u}(t)}{t^{2}}=\frac{1}{2}\left(\|u\|^{2}-\int_{\Omega}\eta(x)u^{2}dx\right)<0.
$$
Showing that 
$$
h_{u}(t)=\frac{h_{u}(t)}{t^{2}}t^{2}
$$
is positive for $t$ small and
$$
\lim_{t\to \infty}h_{u}(t)=\lim_{t\to \infty}\frac{h_{u}(t)}{t^{2}}t^{2}=-\infty.
$$

\medskip

Since $h_{u}$ is a continuous function, previous arguments imply that there exists a global maximum point $t_{u}>0$ of $h_{u}$. Now, we are going to show that $t_{u}$ is the unique critical point of $h_{u}$. In fact, supposing that there exist $t_{1}>t_{2}>0$ such that $h_{u}'(t_{1})=h_{u}'(t_{2})=0$, we obtain
$$
0=\int_{[u\neq 0]}\left[\frac{f(x, t_{1}u)}{t_{1}u}-\frac{f(x, t_{2}u)}{t_{2}u}\right]u^{2}dx,
$$
and, by $(f_{1})$, $t_{1}=t_{2}$. The result follows.

\medskip

($ii$) If $u\in \mathcal{A}^{c}$, then $\|u\|^{2}\geq \int_{\Omega}\eta(x)u^{2}dx$. Thus, it follows from $(f_{1})$ that
$$
\frac{h_{u}'(t)}{t}=\|u\|^{2}-\int_{[u\neq 0]}\frac{f(x, tu)}{tu}u^{2}dx\geq \int_{[u\neq 0]}\left[\eta(x)-\frac{f(x, tu)}{tu}\right]u^{2}dx>0, \ \forall \ t>0.
$$ 
Consequently, $h_{u}'(t)=t(h_{u}'(t)/t)>0$ for all $t\in (0,\infty)$.
\end{proof}

\medskip

\begin{remark}\label{rem1}
It is an immediate consequence of previous proposition that, for each $u\in \mathcal{A}$ and $s\in (0, \infty)$, $t_{su}=t_{u}/s$. Moreover, it is clear that $u\in \mathcal{N}$ if, and only if, $t_{u}=1$.
\end{remark}

\medskip

\begin{proposition}\label{proposition2}
Suppose that $f$ satisfies $(f_{1})-(f_{2})$. Then, the following claims hold:

\medskip

\begin{enumerate}
\item[$(A_{1})$] $\tau_{m}:=\inf_{u\in \mathcal{S}_{\chi(\eta)}}t_{u}>0$;

\item[$(A_{2})$] $\zeta_\mathcal{W}:=\max_{u\in \mathcal{W}}t_{u}<\infty$, for all compact set $\mathcal{W}\subset \mathcal{S}_{\mathcal{A}}$;

\item[$(A_{3})$] Map
$\widehat{m}:\mathcal{A}\rightarrow \mathcal{N}$ given by
$\widehat{m}(u)=t_{u}u$ is continuous and
$m:=\widehat{m}_{|_{\mathcal{S}_{\mathcal{A}}}}$ is a homeomorphism between
$\mathcal{S}_{\mathcal{A}}$ and $\mathcal{N}$. Moreover, $m^{-1}(u)=u/\|u\|$.
\end{enumerate}
\end{proposition}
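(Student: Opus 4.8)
The plan rests on a single fact, from which $(A_1)$, $(A_2)$ and $(A_3)$ all follow quickly: the map $u\mapsto t_u$ is well defined (by the preceding proposition) and continuous on $\mathcal{A}$. Before proving this I would record an elementary consequence of $(f_1)$ and \eqref{positive}: since $t\mapsto f(x,t)/|t|$ is increasing a.e.\ and $2F(x,t)/t^2$ tends to the finite values $\alpha(x)$ as $t\to0$ and $\eta(x)$ as $|t|\to\infty$, an L'Hospital-type argument gives that $f(x,t)/t$ has the same limits, hence
\[
\alpha(x)\le \frac{f(x,t)}{t}\le \eta(x)\quad\text{a.e. }x\in\Omega,\ \forall\,t\neq 0,
\]
and in particular $|f(x,t)/t|\le C:=\max\{|\alpha|_\infty,|\eta|_\infty\}$. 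This uniform bound is what will allow every passage to the limit below to be carried out by a generalized dominated convergence argument, with dominating functions $Cu_n^2\to Cu^2$ in $L^1(\Omega)$ (using $u_n\to u$ in $H^1_0(\Omega)\hookrightarrow L^2(\Omega)$).

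\emph{Continuity of $u\mapsto t_u$ on $\mathcal{A}$.} Take $u_n\to u$ in $\mathcal{A}$. By the characterization of $t_{u_n}$ one has $\|u_n\|^2=\int_{[u_n\neq0]}\frac{f(x,t_{u_n}u_n)}{t_{u_n}u_n}\,u_n^2\,dx$. First I would show $\{t_{u_n}\}$ is bounded and bounded away from $0$. If a subsequence had $t_{u_n}\to\infty$, then on $[u\neq0]$ we get $|t_{u_n}u_n|\to\infty$, so the integrand converges a.e.\ to $\eta(x)u^2$ (and to $0$ on $[u=0]$), yielding $\|u\|^2=\int_\Omega\eta u^2\,dx$, which contradicts $u\in\mathcal{A}$. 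If a subsequence had $t_{u_n}\to0$, the same scheme with the limit at the origin gives $\|u\|^2=\int_\Omega\alpha u^2\,dx$, contradicting $\|u\|^2>\int_\Omega\alpha u^2\,dx$ (a restatement of $\lambda_1(\alpha)>1$ in $(f_2)$, valid for every $u\neq0$). Hence $t_{u_n}\in[a,b]$ with $0<a\le b<\infty$. For any subsequence with $t_{u_n}\to t^*\in(0,\infty)$, continuity of $f(x,\cdot)$ lets me pass to the limit and obtain $\|u\|^2=\int_{[u\neq0]}\frac{f(x,t^*u)}{t^*u}u^2\,dx$, i.e.\ $h_u'(t^*)=0$; the uniqueness in the preceding proposition forces $t^*=t_u$, so $t_{u_n}\to t_u$.

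\emph{Deducing the claims.} For $(A_1)$: $\mathcal{S}_{\chi(\eta)}$ is the unit sphere of the finite-dimensional space $\oplus_{k=1}^{m}V_{\lambda_k(\eta)}$, hence compact, and it is contained in $\mathcal{A}$ — for $u=\sum_{k\le m}u_k$ in that span, orthogonality of the $u_k$ in both $H^1_0(\Omega)$ and the $\eta$-weighted inner product gives $\int_\Omega\eta u^2\,dx=\sum_{k\le m}\|u_k\|^2/\lambda_k(\eta)>\sum_{k\le m}\|u_k\|^2=\|u\|^2$, since $\lambda_k(\eta)\le\lambda_m(\eta)<1$. Thus $u\mapsto t_u$ is continuous and strictly positive on this compact set and attains a minimum $\tau_m>0$. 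For $(A_2)$: a compact $\mathcal{W}\subset\mathcal{S}_{\mathcal{A}}\subset\mathcal{A}$ is sent by the continuous map $u\mapsto t_u$ to a compact subset of $(0,\infty)$, so $\zeta_{\mathcal{W}}=\max_{\mathcal{W}}t_u<\infty$. For $(A_3)$: $\widehat m(u)=t_uu$ is continuous (product of $u\mapsto t_u$ and the identity) and lands in $\mathcal{N}$ by the preceding proposition; its restriction $m$ to $\mathcal{S}_{\mathcal{A}}$ is injective (from $t_{u_1}u_1=t_{u_2}u_2$ with $u_i\in\mathcal{S}$, taking norms gives $t_{u_1}=t_{u_2}$, hence $u_1=u_2$) and surjective onto $\mathcal{N}$ (given $w\in\mathcal{N}\subset\mathcal{A}$ by Lemma~\ref{lemma1}$(iv)$, the cone property of $\mathcal{A}$ puts $u:=w/\|w\|$ in $\mathcal{S}_{\mathcal{A}}$, and Remark~\ref{rem1} with $t_w=1$ gives $t_u=\|w\|$, so $m(u)=w$); finally $m^{-1}(w)=w/\|w\|$ is continuous, so $m$ is a homeomorphism.

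\emph{Main obstacle.} The delicate part is the continuity of $u\mapsto t_u$, specifically ruling out $t_{u_n}\to0$ and $t_{u_n}\to\infty$ uniformly: on $[u=0]$ the arguments $t_{u_n}u_n$ need not diverge, so plain dominated convergence fails and one must combine the uniform estimate $|f(x,t)/t|\le C$ from $(f_1)$--\eqref{positive} with the $L^1$-convergence $u_n^2\to u^2$. This is the same mechanism that makes $\mathcal{S}_{\mathcal{A}}$ non-complete, with $t_u\to\infty$ as $u$ approaches $\partial\mathcal{S}_{\mathcal{A}}$; it is exactly why in $(A_2)$ one must require $\mathcal{W}$ compact rather than merely bounded.
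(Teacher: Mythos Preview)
Your proof is correct, and the underlying mechanism --- ruling out $t_{u_n}\to 0$ and $t_{u_n}\to\infty$ via the limiting values $\alpha(x)$ and $\eta(x)$ of $f(x,t)/t$, then invoking uniqueness of $t_u$ --- is exactly what the paper uses. The organization differs: you isolate the continuity of $u\mapsto t_u$ on $\mathcal{A}$ as a single lemma and then read off $(A_1)$, $(A_2)$ from compactness and $(A_3)$ immediately, whereas the paper establishes $(A_1)$ and $(A_2)$ separately by contradiction and only afterwards combines them to obtain continuity in $(A_3)$. Your packaging is cleaner and makes the logical flow transparent; the explicit uniform bound $\alpha(x)\le f(x,t)/t\le\eta(x)$ that you record up front is also a nice clarification of why the dominated-convergence passages are legitimate.

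One point worth noting: the paper's proof of $(A_1)$ actually argues with an arbitrary sequence $\{u_n\}\subset\mathcal{S}_{\mathcal{A}}$ (not just in the finite-dimensional sphere $\mathcal{S}_{\chi(\eta)}$) and only uses weak convergence, so it in fact yields the stronger conclusion $\inf_{\mathcal{S}_{\mathcal{A}}}t_u>0$. This stronger form is tacitly invoked later (e.g.\ in Proposition~\ref{main1}, where a subsequential limit $t_0$ of $\{t_{u_n}\}$ with $u_n\in\mathcal{S}_{\mathcal{A}}$ is asserted to be positive ``by $(A_1)$''). Your compactness-based deduction of $(A_1)$ gives only the stated conclusion over $\mathcal{S}_{\chi(\eta)}$; however, the lower-bound half of your continuity argument, run with weak (rather than strong) convergence and the compact embedding $H_0^1\hookrightarrow L^2$, recovers the stronger statement with no extra work.
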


\medskip

\begin{proof}
($A_{1}$) It is a consequence of the proof of Lemma \ref{lemma1}$(v)$ that $\mathcal{S}_{\chi(\eta)}\subset \mathcal{S}_{\mathcal{A}}$. Thus, suppose that there exists $\{u_{n}\}\subset \mathcal{S}_{\mathcal{A}}$ such that $t_{n}:=t_{u_{n}}\to 0$. In this case, we get $u\in H_{0}^{1}(\Omega)$ such that $u_{n}\rightharpoonup u$ in $H_{0}^{1}(\Omega)$. It follows from $(f_1)$ and Lebesgue Dominated Convergence Theorem, that
\begin{equation}\label{nonzero}
\int_{\Omega}\left[\frac{f(x, t_{n}u_{n})}{t_{n}u_{n}}\right]\chi_{[u_{n}\neq 0]}u_{n}^{2} dx\to \int_{\Omega}\alpha(x)u^{2}dx.
\end{equation}

\medskip

From Proposition \ref{proposição1}, we have
\begin{equation}\label{toma}
1=\int_{\Omega}\left[\frac{f(x, t_{n}u_{n})}{t_{n}u_{n}}\right]\chi_{[u_{n}\neq 0]}u_{n}^{2} dx, \ \forall \ n\in\N.
\end{equation}
Now, by 
$$
\chi_{[u_{n}\neq 0]}(x)\to 1 \ \mbox{a.e. in $[u\neq 0]$ and $\chi_{[u_{n}\neq 0]}(x)\to 0$ a.e. in $[u=0]$},
$$
and \eqref{nonzero}, passing to the limit as $n$ tends to infinity in \eqref{toma}, we get
$$
1=\int_{\Omega}\alpha(x)u^{2}dx.
$$
If $\alpha=0$, we have a clear contradiction. Otherwise, the inequality
$$
1\leq (1/\lambda_{1}(\alpha))\|u\|^{2}\leq 1/\lambda_{1}(\alpha),
$$
contradicts $(f_{2})$.

\medskip

($A_{2}$) Suppose that there exists $\{u_{n}\}\subset \mathcal{W}$ such that $t_{n}:=t_{u_{n}}\to \infty$. Since $\mathcal{W}$ is compact, passing to a subsequence, we obtain $u\in \mathcal{W}$ such that $u_{n}\to u$ in $H_{0}^{1}(\Omega)$. Since
$$
1=\|u_{n}\|^{2}=\int_{\Omega}\left[\frac{f(x, t_{n}u_{n})}{t_{n}u_{n}}\right]\chi_{[u_{n}\neq 0]}u_{n}^{2} dx, \ \forall \ n\in\N,
$$
with
$$
\int_{\Omega}\left[\frac{f(x, t_{n}u_{n})}{t_{n}u_{n}}\right]\chi_{[u_{n}\neq 0]}u_{n}^{2} dx= \int_{[u\neq 0]}\left[\frac{f(x, t_{n}u_{n})}{t_{n}u_{n}}\right]\chi_{[u_{n}\neq 0]}u_{n}^{2} dx+\int_{[u=0]}\left[\frac{f(x, t_{n}u_{n})}{t_{n}u_{n}}\right]\chi_{[u_{n}\neq 0]}u_{n}^{2} dx,
$$
where
$$
\chi_{[u_{n}\neq 0]}(x)\to 1 \ \mbox{a.e. in $[u\neq 0]$ and $\chi_{[u_{n}\neq 0]}(x)\to 0$ a.e. in $[u=0]$},
$$
passing to the lower limit as $n\to\infty$, it follows from $(f_1)$ and Lebesgue Dominated Convergence Theorem, that
$$
1=\|u\|^{2}\geq \int_{\Omega}\eta(x)u^{2}dx.
$$
Last inequality implies that $u\in \mathcal{S}_{\mathcal{A}}^{c}$, leading us to a contradiction, since $u\in \mathcal{W}\subset\mathcal{S}_{\mathcal{A}}$.

\medskip

($A_{3}$) We first show that $\widehat{m}$ is continuous. Let $\{u_{n}\}\subset \mathcal{A}$ and $u\in\mathcal{A}$, be such that $u_{n}\to u$ in $H_{0}^{1}(\Omega)$. From Remark \ref{rem1} ($\widehat{m}(tw)=\widehat{m}(w)$ for all $w\in \mathcal{A}$ and $t>0$), we can assume, without loss fo generality, that $\{u_{n}\}\subset S_{\mathcal{A}}$. Thus,

\medskip

\begin{equation}\label{conv1}
t_{n}=t_{n}\|u_{n}\|^{2}=\int_{\Omega}f(x, t_{n}u_{n})u_{n}dx,
\end{equation}
where $t_{n}:=t_{u_{n}}$. From ($A_{1}$) and $(A_{2})$, it follows that, up to a subsequence, $t_{n}\to t>0$. Thence, passing to the limit as $n\to\infty$ in \eqref{conv1}, we have
$$
t=t\|u\|^{2}=\int_{\Omega}f(x, tu)u dx,
$$
showing that $\widehat{m}(u_{n})=t_{n}u_{n}\to tu=\widehat{m}(u)$. The second part  of ($A_{3}$) is immediate.
\end{proof}

\medskip

\begin{lemma}\label{lemma3}
Suppose that $(f_1)-(f_2)$ holds. Then, $I(u)>0$, for all $u\in\mathcal{N}$.
\end{lemma}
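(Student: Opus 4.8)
The plan is to show that for $u\in\mathcal{N}$ the energy $I(u)$ coincides with $h_u(1)=\max_{t>0}h_u(t)$ (by Proposition~\ref{proposi\~o1}$(i)$ together with $\mathcal{N}\subset\mathcal{A}$ from Lemma~\ref{lemma1}$(iv)$, and the fact that $u\in\mathcal{N}$ forces $t_u=1$ via Remark~\ref{rem1}), and then to exhibit a single value $t_0>0$ at which $h_u(t_0)>0$. Since $h_u(1)\geq h_u(t_0)$, this gives $I(u)=h_u(1)>0$.

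First I would recall from the proof of Proposition~\ref{proposi\~o1}$(i)$ the expansion
\[
\frac{h_u(t)}{t^2}=\frac12\|u\|^2-\int_{[u\neq0]}\left[\frac{F(x,tu)}{(tu)^2}\right]u^2\,dx,
\]
and the computed limit $\lim_{t\to0}h_u(t)/t^2=\tfrac12\bigl(\|u\|^2-\int_\Omega\alpha(x)u^2\,dx\bigr)$. The key point is that this limit is strictly positive: indeed, by $(f_2)$ we have $\lambda_1(\alpha)>1$, hence $\int_\Omega\alpha(x)u^2\,dx\le\frac{1}{\lambda_1(\alpha)}\|u\|^2<\|u\|^2$ for $u\neq0$ (this is exactly the variational characterization of $\lambda_1(\alpha)$ used already in the proof of $(A_1)$). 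Therefore there exists $\delta>0$ small so that $h_u(\delta)/\delta^2>0$, i.e.\ $h_u(\delta)>0$.

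Then, combining this with Proposition~\ref{proposi\~o1}$(i)$: since $u\in\mathcal{N}\subset\mathcal{A}$, we have $t_u=1$ and $h_u$ is strictly increasing on $(0,1)$ and strictly decreasing on $(1,\infty)$, so $h_u(1)=\max_{t\ge0}h_u(t)\ge h_u(\delta)>0$. As $I(u)=h_u(1)$, this yields $I(u)>0$ for every $u\in\mathcal{N}$.

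I do not expect any serious obstacle here — the statement is essentially a corollary of the monotonicity structure of $h_u$ established in Proposition~\ref{proposi\~o1} plus the strict inequality $\lambda_1(\alpha)>1$ from $(f_2)$. The only point requiring a line of care is justifying that $\int_\Omega\alpha(x)u^2\,dx<\|u\|^2$ strictly for all $u\in H_0^1(\Omega)\setminus\{0\}$ (including where $\alpha$ may vanish or be negative on parts of $\Omega$), which follows from $u\mapsto\|u\|^2-\int_\Omega\alpha^+(x)u^2\,dx\ge(1-1/\lambda_1(\alpha^+))\|u\|^2>0$ together with $\int_\Omega\alpha(x)u^2\,dx\le\int_\Omega\alpha^+(x)u^2\,dx$; alternatively one simply invokes the already-established limit $\lim_{t\to0}h_u(t)/t^2>0$ from Proposition~\ref{proposi\~o1}, which the authors proved precisely under $(f_1)-(f_2)$, and reads off $h_u(\delta)>0$ directly from it.
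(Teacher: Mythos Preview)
Your proof is correct, but it takes a different route from the paper. The paper's argument is more direct: for $u\in\mathcal{N}$ one has $\|u\|^2=\int_\Omega f(x,u)u\,dx$, hence
\[
I(u)=\int_{[u\neq0]}\left[\frac12\frac{f(x,u)}{u}-\frac{F(x,u)}{u^2}\right]u^2\,dx,
\]
and the integrand is strictly positive a.e.\ on $[u\neq0]$ by Lemma~\ref{lemma2}$(iii)$, which is a pointwise consequence of $(f_1)$ alone. Your argument instead leverages the global mountain-pass shape of $h_u$ already established in Proposition~\ref{proposi\~o1}: since $u\in\mathcal{N}$ forces $t_u=1$, $I(u)=h_u(1)$ is the global maximum of $h_u$, and it suffices to exhibit one positive value of $h_u$, which the limit $\lim_{t\to0}h_u(t)/t^2>0$ (coming from $(f_2)$) provides. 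The paper's route is shorter and uses only the pointwise monotonicity encoded in $(f_1)$ for the inequality itself; your route recycles more of the previously developed structure and makes the role of $(f_2)$ explicit. Both are perfectly valid and equally rigorous.
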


\medskip

\begin{proof}
For any $u\in \mathcal{S}_{\mathcal{A}}$, we get
$$
I(t_{u}u)=\int_{\Omega}\left[\frac{1}{2}\frac{f(x, t_{u}u)}{t_{u}u}-\frac{F(x, t_{u}u)}{(t_{u}u)^{2}}\right](t_u u)^{2}dx.
$$
The result follows from Lemma \ref{lemma2}$(iii)$.
\end{proof}

\medskip

In what follows, let us consider the maps $\widehat{\Psi}:\mathcal{A}\rightarrow\mathbb{R} \ \mbox{and} \ \Psi:\mathcal{S}_{\mathcal{A}}\rightarrow \mathbb{R}$, given by 
$$
\widehat{\Psi}(u)=I(\widehat{m}(u)) \ \mbox{and} \ \Psi:=\widehat{\Psi}_{|_{\mathcal{S_{\mathcal{A}}}}}.
$$ 
These maps will be very important in our arguments mainly because of their properties, which will be presented in the next result. The proof of such a result is a consequence of Proposition \ref{proposition2} and the details can be found in \cite{SW}. 

\medskip



\begin{proposition}\label{proposition3}
Suppose that $f$ satisfies $(f_{1})-(f_{2})$. Then,
\begin{enumerate}
\item[$(i)$] $\widehat{\Psi}\in C^{1}(\mathcal{A}, \mathbb{R})$ and
$$
\widehat{\Psi}'(u)v=\frac{\|\widehat{m}(u)\|}{\|u\|}I'(\widehat{m}(u))v,
\ \forall u\in \mathcal{A} \ \mbox{and} \ \forall v\in H_{0}^{1}(\Omega).
$$

\item[$(ii)$] $\Psi\in C^{1}(\mathcal{S}_{\mathcal{A}}, \mathbb{R})$ and
$$
\Psi'(u)v=\|m(u)\|I'(m(u))v, \ \forall v\in T_{u}\mathcal{S}_{\mathcal{A}}.
$$

\item[$(iii)$] If $\{u_{n}\}$ is a $(PS)_{c}$ sequence for $\Psi$ then  $\{m(u_{n})\}$ is a $(PS)_{c}$ sequence for $I$.
If $\{u_{n}\}\subset \mathcal{N}$ is a bounded $(PS)_{c}$ sequence for $I$ then $\{m^{-1}(u_{n})\}$ is a $(PS)_{c}$ sequence for $\Psi$.

\item[$(iv)$] $u$ is a critical point of $\Psi$ if, and only if, $m(u)$ is a nontrivial critical point of $I$. Moreover,
$$
c_{\mathcal{N}}:=\inf_{u\in \mathcal{N}}I(u)=\inf_{u\in \mathcal{A}}\max_{t>0}I(tu)=\inf_{u\in\mathcal{S}_{\mathcal{A}}}\max_{t>0}I(tu)=\inf_{u\in\mathcal{S}_{\mathcal{A}}}\Psi(u).
$$
\end{enumerate}
\end{proposition}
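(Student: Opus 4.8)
The plan is to transcribe, in the present only-continuous setting, the abstract scheme of Szulkin and Weth \cite{SW}; the structural inputs are Proposition \ref{proposição1}, Remark \ref{rem1}, the estimates $(A_{1})$--$(A_{2})$ and, decisively, the continuity of $\widehat{m}$ and the homeomorphism property $(A_{3})$ of Proposition \ref{proposition2}. The key point is that, because $f$ is not differentiable, one never differentiates $u\mapsto t_{u}$: the $C^{1}$-regularity of $\widehat{\Psi}$ is produced by a sandwich argument using only the maximality of $t_{u}$ (Proposition \ref{proposição1}$(i)$) and $I\in C^{1}$.

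For $(i)$ (whence $(ii)$ follows by restriction), fix $w\in\mathcal{A}$ and $z\in H_{0}^{1}(\Omega)$; since $\mathcal{A}$ is open, $w+sz\in\mathcal{A}$ for $|s|$ small, and we write $t_{s}:=t_{w+sz}$. Using that $t_{s}$ maximizes $t\mapsto I(t(w+sz))$ and $t_{0}$ maximizes $t\mapsto I(tw)$, one sandwiches, for both signs of $s$,
$$
I(t_{0}(w+sz))-I(t_{0}w)\ \le\ \widehat{\Psi}(w+sz)-\widehat{\Psi}(w)\ \le\ I(t_{s}(w+sz))-I(t_{s}w).
$$
Writing each extreme as $s\,t\int_{0}^{1}I'(tw+\theta s\,tz)z\,d\theta$ (with $t\in\{t_{0},t_{s}\}$) via the fundamental theorem of calculus for $I$, then dividing by $s$ and letting $s\to 0$ --- using $t_{s}\to t_{0}$, which comes from the continuity of $\widehat{m}$, and the continuity of $I'$ --- both ends tend to $t_{0}\,I'(t_{0}w)z$. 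Hence the Gateaux derivative of $\widehat{\Psi}$ at $w$ exists, equals $\tfrac{\|\widehat{m}(w)\|}{\|w\|}I'(\widehat{m}(w))$, and depends continuously on $w$; therefore $\widehat{\Psi}\in C^{1}(\mathcal{A},\R)$. Restricting to the open submanifold $\mathcal{S}_{\mathcal{A}}\subset\mathcal{S}$, whose tangent space at $u$ is $T_{u}\mathcal{S}=\{v:\langle u,v\rangle=0\}$, and using $\|u\|=1$ there, yields $(ii)$.

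For $(iii)$--$(iv)$ the engine is one identity: since $m(u)=t_{u}u\in\mathcal{N}$ one has $I'(m(u))m(u)=0$, hence $I'(m(u))u=0$, so $I'(m(u))$ annihilates $\R u$; the orthogonal splitting $H_{0}^{1}(\Omega)=T_{u}\mathcal{S}\oplus\R u$ then gives
$$
\|\Psi'(u)\|_{(T_{u}\mathcal{S}_{\mathcal{A}})^{*}}=\|m(u)\|\,\|I'(m(u))\|_{(H_{0}^{1}(\Omega))^{*}},\qquad u\in\mathcal{S}_{\mathcal{A}}.
$$
Together with $\Psi(u)=I(m(u))$ and $\|m(u)\|=t_{u}>0$, this gives the critical-point correspondence of $(iv)$ at once (and every element of $\mathcal{N}$ is nontrivial). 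For $(iii)$: if $\{u_{n}\}$ is $(PS)_{c}$ for $\Psi$ then $I(m(u_{n}))=\Psi(u_{n})\to c$, and $t_{u_{n}}=\|m(u_{n})\|$ stays bounded away from $0$ whenever $c>0$ (if $t_{u_{n}}\to 0$ along a subsequence then $m(u_{n})\to 0$ and $c=0$; by Lemma \ref{lemma3}, $c\ge 0$ in any case, and the levels relevant to the applications are strictly positive), whence $\|I'(m(u_{n}))\|_{*}\to 0$; conversely, if $\{u_{n}\}\subset\mathcal{N}$ is a bounded $(PS)_{c}$ sequence for $I$ then $m^{-1}(u_{n})=u_{n}/\|u_{n}\|$, $\Psi(m^{-1}(u_{n}))=I(u_{n})\to c$, and the identity with $\sup_{n}\|u_{n}\|<\infty$ forces $\|\Psi'(m^{-1}(u_{n}))\|_{*}\to 0$. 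Finally, the minimax chain results from: Proposition \ref{proposição1}$(i)$, which gives $\max_{t>0}I(tu)=I(\widehat{m}(u))=\widehat{\Psi}(u)$ for $u\in\mathcal{A}$; Remark \ref{rem1}, which makes $\widehat{\Psi}$ constant along rays, so $\inf_{\mathcal{A}}\widehat{\Psi}=\inf_{\mathcal{S}_{\mathcal{A}}}\Psi$; and $m:\mathcal{S}_{\mathcal{A}}\to\mathcal{N}$ being a bijection with $I\circ m=\Psi$, so $\inf_{\mathcal{S}_{\mathcal{A}}}\Psi=\inf_{\mathcal{N}}I=c_{\mathcal{N}}$.

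The \textbf{main obstacle} is precisely the non-differentiability of $f$: it rules out the classical implicit-function-theorem proof that $u\mapsto t_{u}$ is $C^{1}$, and the sandwich inequality above is the device that replaces it. A minor additional difficulty, absent from the superquadratic theory of \cite{SW}, is that $\mathcal{N}$ need not be bounded away from the origin, so in the Palais--Smale transfer one must rule out the collapse $\|m(u_{n})\|\to 0$, which can only occur at the level $c=0$.
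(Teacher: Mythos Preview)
Your proposal is correct and follows precisely the Szulkin--Weth scheme that the paper itself invokes: the paper does not give a proof at all, stating only that ``the proof of such a result is a consequence of Proposition \ref{proposition2} and the details can be found in \cite{SW}.'' Your sandwich argument for $(i)$, the orthogonal-splitting identity for $(iii)$--$(iv)$, and the minimax chain via Proposition \ref{proposi��o1} and Remark \ref{rem1} are exactly the ingredients of \cite{SW}.

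One small remark: your caveat at the end about the possible collapse $\|m(u_{n})\|\to 0$ is in fact unnecessary here. Although the \emph{statement} of $(A_{1})$ in Proposition \ref{proposition2} concerns only the finite-dimensional sphere $\mathcal{S}_{\chi(\eta)}$, its \emph{proof} is carried out for an arbitrary sequence $\{u_{n}\}\subset\mathcal{S}_{\mathcal{A}}$ with $t_{u_{n}}\to 0$ and derives a contradiction from $(f_{2})$; hence $\inf_{u\in\mathcal{S}_{\mathcal{A}}}t_{u}>0$, i.e.\ $\mathcal{N}$ is bounded away from the origin, and the Palais--Smale transfer goes through for every level $c$ without the restriction $c>0$.
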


\medskip

\begin{remark}\label{rem2}
It is a consequence of Lemma \ref{lemma3} that $c_{\mathcal{N}}\geq 0$. Moreover, if $c_{\mathcal{N}}$ is achieved then it is positive.
\end{remark} 

\medskip

Since $\mathcal{S}_{\mathcal{A}}$ can be non-complete, we need to be careful about the behaviour of minimizing sequences for $\Psi$ near the boundary. Next result helps us in this direction.

\medskip

\begin{proposition}\label{main1}
Suppose that $(f_{1})-(f_{2})$ hold. If $\{u_{n}\}\subset \mathcal{S}_{\mathcal{A}}$ is such that $dist(u_{n}, \partial \mathcal{S}_{\mathcal{A}})\to 0$, then there exists $u\in H_{0}^{1}(\Omega)\backslash\{0\}$ such that $u_{n}\rightharpoonup u$ in $H_{0}^{1}(\Omega)$, $t_{u_{n}}\to\infty$ and 
$$
\liminf\limits_{n\to\infty}\Psi(u_{n})\geq\int_{[u\neq 0]}\beta(x) dx,
$$ 
where the maps $\beta$ was defined in \eqref{beta}. 
\end{proposition}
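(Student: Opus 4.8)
The plan is to argue in three stages: locate the weak limit $u$ and compute $\int_\Omega\eta u^2\,dx$; show that $t_{u_n}\to\infty$; and then pass to the limit in $\Psi(u_n)$ by Fatou's lemma. Since $\{u_n\}\subset\mathcal{S}_{\mathcal{A}}\subset\mathcal{S}$ is bounded in $H_0^1(\Omega)$, up to a subsequence $u_n\rightharpoonup u$ in $H_0^1(\Omega)$, $u_n\to u$ in $L^2(\Omega)$ and a.e. in $\Omega$, and by weak lower semicontinuity $\|u\|^2\le1$. To determine $\int_\Omega\eta u^2\,dx$ I would use the distance hypothesis directly: pick $v_n\in\partial\mathcal{S}_{\mathcal{A}}$ with $\|u_n-v_n\|\to0$, so $\|v_n\|^2=\int_\Omega\eta v_n^2\,dx=1$, while H\"older and the continuous embedding $H_0^1(\Omega)\hookrightarrow L^2(\Omega)$ give $\bigl|\int_\Omega\eta u_n^2\,dx-\int_\Omega\eta v_n^2\,dx\bigr|\le|\eta|_\infty\,|u_n-v_n|_2\,|u_n+v_n|_2\to0$. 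Hence $\int_\Omega\eta u_n^2\,dx\to1$, and since $\eta\in L^\infty(\Omega)$ and $u_n\to u$ in $L^2(\Omega)$, this forces $\int_\Omega\eta u^2\,dx=1$; in particular $u\in H_0^1(\Omega)\setminus\{0\}$.

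Next I would prove $t_{u_n}\to\infty$ by contradiction. Observe first that $(f_1)$ and \eqref{positive} imply, for a.e. $x$, that $|f(x,s)|\le C|s|$ and that $s\mapsto f(x,s)/s$ converges to $\alpha(x)$ as $s\to0$, converges to $\eta(x)$ as $|s|\to\infty$, and stays strictly below $\eta(x)$ for every $s\neq0$ (the monotone quotient never attains its limit). Suppose $t_{u_n}\not\to\infty$; along a subsequence $t_{u_n}\to t_0\in[0,\infty)$, and since $t_{u_n}$ is the maximum point of $h_{u_n}$,
$$1=\|u_n\|^2=\int_\Omega\frac{f(x,t_{u_n}u_n)}{t_{u_n}u_n}\,\chi_{[u_n\neq0]}\,u_n^2\,dx.$$
The integrand is dominated by $Cu_n^2$, so dominated convergence applies: on $[u\neq0]$ the quotient tends to $f(x,t_0u)/(t_0u)$ if $t_0>0$ and to $\alpha(x)$ if $t_0=0$, while the integral over $[u=0]$ vanishes. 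If $t_0=0$ we get $1=\int_\Omega\alpha u^2\,dx>0$, whence $\lambda_1(\alpha)=\lambda_1(\alpha)\int_\Omega\alpha u^2\,dx\le\|u\|^2\le1$, contradicting $\lambda_1(\alpha)>1$ from $(f_2)$. If $t_0\in(0,\infty)$, then
$$1=\int_{[u\neq0]}\frac{f(x,t_0u)}{t_0u}\,u^2\,dx<\int_{[u\neq0]}\eta(x)\,u^2\,dx=\int_\Omega\eta(x)\,u^2\,dx=1,$$
again impossible. Hence $t_{u_n}\to\infty$. This is the heart of the proof, and the subtle point is that the weaker bound $\int_\Omega\eta u^2\,dx\ge1$ would not close the second case: one really needs the exact equality $\int_\Omega\eta u^2\,dx=1$ from the first stage, together with the strict monotonicity in $(f_1)$.

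Finally, using $t_{u_n}u_n\in\mathcal{N}$ exactly as in the proof of Lemma \ref{lemma3},
$$\Psi(u_n)=I(t_{u_n}u_n)=\int_\Omega\left[\frac12\,f(x,t_{u_n}u_n)(t_{u_n}u_n)-F(x,t_{u_n}u_n)\right]dx.$$
By Lemma \ref{lemma2}$(i)$ the integrand $g_n(x):=\frac12 f(x,t_{u_n}u_n(x))(t_{u_n}u_n(x))-F(x,t_{u_n}u_n(x))$ is nonnegative a.e.; for a.e. $x\in[u\neq0]$ we have $u_n(x)\to u(x)\neq0$ and $t_{u_n}\to\infty$, so $|t_{u_n}u_n(x)|\to\infty$ and, by \eqref{beta}, $g_n(x)\to\beta(x)$, while $\liminf_n g_n(x)\ge0$ on $[u=0]$. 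Fatou's lemma then gives
$$\liminf_{n\to\infty}\Psi(u_n)=\liminf_{n\to\infty}\int_\Omega g_n\,dx\ge\int_\Omega\liminf_{n\to\infty}g_n\,dx\ge\int_{[u\neq0]}\beta(x)\,dx,$$
understood in $[0,\infty]$ when $\beta$ is not integrable. Choosing at the outset a subsequence along which $\Psi(u_n)$ converges to $\liminf_n\Psi(u_n)$ makes the estimate hold for the original sequence, which completes the plan.
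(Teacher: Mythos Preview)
Your proof is correct and follows essentially the same route as the paper: identify the weak limit via the distance hypothesis and compact embedding to obtain $\int_\Omega\eta u^2\,dx=1$, rule out bounded $t_{u_n}$ by passing to the limit in the Nehari identity and exploiting the strict inequality $f(x,s)/s<\eta(x)$, and finally use the nonnegativity from Lemma~\ref{lemma2}(i) to restrict to $[u\neq 0]$ and pass to the limit. The only cosmetic differences are that you treat the case $t_0=0$ directly (the paper instead invokes the argument of Proposition~\ref{proposition2}$(A_1)$) and that you make the appeal to Fatou's lemma explicit.
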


\medskip

\begin{proof}

Since $\{u_{n}\}\subset \mathcal{S}_{\mathcal{A}}$ is bounded, up to a subsequence, there exists $u\in H_{0}^{1}(\Omega)$ with $u_{n}\rightharpoonup u$ in $H_{0}^{1}(\Omega)$. On the other hand, from $dist(u_{n}, \partial \mathcal{S}_{\mathcal{A}})\to 0$, there exists $\{z_{n}\}\subset \partial\mathcal{S}_{\mathcal{A}}$ such that $\|u_{n}-z_{n}\|\to 0$ as $n\to\infty$. Thus,
\begin{eqnarray*}
\left| \int_{\Omega}\eta(x)u_{n}^{2}dx-1\right|&=&\left| \int_{\Omega}\eta(x)(u_{n}^{2}-z_{n}^{2})dx\right|\\
&\leq& |\eta|_{\infty}|u_{n}+z_{n}|_{2}|u_{n}-z_{n}|_{2}\\
&\leq&  (2|\eta|_{\infty}/\lambda_{1})\|u_{n}-z_{n}\|.
\end{eqnarray*}
Therefore, 
$$
\int_{\Omega}\eta(x)u_{n}^{2}dx\to 1.
$$

\medskip

By using compact embedding from $H_{0}^{1}(\Omega)$ in $L^{2}(\Omega)$, it follows that
\begin{equation}\label{equal}
1=\int_{\Omega}\eta(x)u^{2}dx.
\end{equation}
Thus
\begin{equation}\label{notnull}
u\neq 0.
\end{equation} 
Suppose by contradiction that, for some subsequence, $\{t_{u_{n}}\}$ is bounded. In this case, passing again to a subsequence, there exists $t_{0}> 0$ (see Proposition \ref{proposition2}$(A_{1})$) such that 
\begin{equation}\label{conv2}
t_{u_{n}}\to t_{0}.
\end{equation} 

\medskip

It follows from \eqref{notnull}, \eqref{conv2} and
$$
t_{u_n}=\int_{\Omega}f(x, t_{u_{n}}u_{n})u_{n} dx, \ \forall \ n\in\N,
$$
that
$$
1=\int_{[u\neq 0]}\frac{f(x, t_{0}u)}{t_{0}u}u^{2}dx.
$$
Combining last equality and $(f_{1})$, we have
\begin{equation}\label{issoai}
1<\int_{\Omega}\eta(x)u^{2}dx.
\end{equation}
Clearly, \eqref{equal} and \eqref{issoai} contradicts each other, showing that $t_{u_{n}}\to\infty$. Since the behaviour at infinity of $\{t_{u_{n}}u_{n}\}$ is indeterminated on $[u=0]$, in next inequality, we restrict our arguments to the set $[u\neq 0]$. In fact, it follows from Lemma \ref{lemma2}, that

\medskip

\begin{eqnarray*}
\liminf\limits_{n\to\infty}\Psi(u_{n})&=& \liminf\limits_{n\to\infty}\int_{\Omega}\left[ \frac{1}{2}f(x, t_{u_{n}}u_{n})t_{u_{n}}u_{n}-F(x, t_{u_{n}}u_{n})\right]dx\\
&\geq &\liminf\limits_{n\to\infty}\int_{[u\neq 0]}\left[ \frac{1}{2}f(x, t_{u_{n}}u_{n})t_{u_{n}}u_{n}-F(x, t_{u_{n}}u_{n})\right]dx\\
&=& \int_{[u\neq 0]}\beta(x) dx.
\end{eqnarray*}
\end{proof}

\medskip

The following hypothesis is certainly weaker than ($fF$) and will be considered in our next result
\begin{equation}\label{finito}\tag{$\beta$}
\inf\operatorname{ess} \limits_{x\in\Omega}\beta(x)> \frac{|\eta|_{\infty}^{N/2} \tau_{m}^{2}}{2\lambda_{1}(\eta-\alpha)S(\Omega)^{N/2}},
\end{equation}
where $\tau_{m}$ is defined in Proposition \ref{proposition2}$(i)$.

\medskip

\begin{lemma}\label{level}
Suppose that $f$ satisfies $(f_{1})-(f_{2})$ and \eqref{finito}. Then
$$
c_{\mathcal{N}}<\inf_{u\in\partial\mathcal{S}_{\mathcal{A}}}\int_{[u\neq 0]}\beta(x)dx.
$$ 
\end{lemma}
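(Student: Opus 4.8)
The plan is to estimate the right-hand side from below by a bound depending only on $|\eta|_\infty$, $S(\Omega)$, and then to estimate $c_{\mathcal{N}}$ from above by testing the characterization $c_{\mathcal{N}}=\inf_{u\in\mathcal{S}_{\mathcal{A}}}\Psi(u)$ against a carefully chosen element of $\mathcal{S}_{\chi(\eta)}\subset\mathcal{S}_{\mathcal{A}}$. For the lower bound, Lemma \ref{limita} gives $|[u\neq 0]|\geq (S(\Omega)/|\eta|_\infty)^{N/2}$ for every $u\in\partial\mathcal{S}_{\mathcal{A}}$, hence
$$
\inf_{u\in\partial\mathcal{S}_{\mathcal{A}}}\int_{[u\neq 0]}\beta(x)\,dx\;\geq\;\Big(\operatorname*{ess\,inf}_{x\in\Omega}\beta(x)\Big)\Big(\frac{S(\Omega)}{|\eta|_\infty}\Big)^{N/2},
$$
using that $\beta>0$ a.e. So it suffices to show $c_{\mathcal{N}}< (\operatorname*{ess\,inf}\beta)(S(\Omega)/|\eta|_\infty)^{N/2}$, and by hypothesis \eqref{finito} it is enough to prove the cleaner inequality
$$
c_{\mathcal{N}}\;\leq\;\frac{\tau_m^{2}}{2\,\lambda_1(\eta-\alpha)}.
$$

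For the upper bound on $c_{\mathcal{N}}$, I would pick an eigenfunction realizing (or nearly realizing) $\lambda_1(\eta-\alpha)$ inside $\oplus_{k=1}^m V_{\lambda_k(\eta)}$. More precisely, take $v\in\mathcal{S}_{\chi(\eta)}$ with $\int_\Omega(\eta-\alpha)v^2\,dx$ maximal, i.e. $\int_\Omega(\eta-\alpha)v^2\,dx = 1/\lambda_1(\eta-\alpha)$ (the Rayleigh quotient over the unit sphere of that finite-dimensional space; one should check that the relevant first eigenvalue of $-\Delta=\lambda(\eta-\alpha)u$ restricted appropriately gives this — more safely, just use that $\sup_{w\in\mathcal{S}}\int(\eta-\alpha)w^2 = 1/\lambda_1(\eta-\alpha)$ and that the sup over $\mathcal{S}_{\chi(\eta)}$ is at least... ). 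Then $c_{\mathcal{N}}\leq\Psi(v)=I(t_v v)$ where $t_v\leq\tau_m$ is false in the wrong direction — note $\tau_m=\inf t_u$ over $\mathcal{S}_{\chi(\eta)}$, so $t_v\geq\tau_m$, which is the wrong inequality. This is the delicate point, so let me reorganize: I would instead use $h_u'(t)>0$ on $(0,t_u)$ together with Lemma \ref{lemma2}$(ii)$ (so $F(x,tu)/(tu)^2\geq \alpha(x)/2$, since $F/t^2$ is increasing from its limit $\alpha/2$ at $0$) to get, for any $u\in\mathcal{S}_{\mathcal{A}}$ and any $t>0$,
$$
I(tu)\;=\;\frac{t^2}{2}\|u\|^2-\int_\Omega F(x,tu)\,dx\;\leq\;\frac{t^2}{2}\Big(1-\int_\Omega\alpha(x)u^2\,dx\Big).
$$
Evaluating at $t=t_u$ and then at $u=v\in\mathcal{S}_{\chi(\eta)}$, and using $t_v\geq\tau_m$ is again the wrong way; so instead I evaluate the displayed inequality at a fixed small $t=\tau_m$ only if $I$ is still increasing there, i.e. if $\tau_m\leq t_v$ — which holds by definition of $\tau_m$! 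Hence
$$
c_{\mathcal{N}}\leq\Psi(v)=\max_{t>0}I(tv)=I(t_v v)\geq I(\tau_m v),
$$
wrong sign again.

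The correct route, which I would actually carry out: since $c_{\mathcal{N}}=\inf_{u\in\mathcal{S}_{\mathcal{A}}}I(t_u u)$, pick $v\in\mathcal{S}_{\chi(\eta)}$ maximizing $\int(\eta-\alpha)v^2$. Because $h_v$ is strictly increasing on $(0,t_v)$ and $t_v\geq\tau_m$, we have $c_{\mathcal{N}}\leq I(t_v v)$, and we need an \emph{upper} bound for $I(t_v v)$; but $I(t_v v)\le \frac{t_v^2}{2}(1-\int\alpha v^2)$ is an increasing-in-$t_v$ bound, useless. Instead bound $I(t_v v)$ by its value at the competitor where $F$ is replaced by its infinitesimal quadratic $\frac{\alpha}{2}t^2v^2$: writing $\psi(t)=\frac{t^2}{2}-\int F(x,tv)$, we have $\psi'(t)=t-\int f(x,tv)v$, and at $t_v$, $\psi'(t_v)=0$. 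Using Lemma \ref{lemma2}$(iii)$, $\int F(x,t_v v)\,dx < \frac12\int f(x,t_v v)t_v v\,dx = \frac{t_v^2}{2}$, giving only $I(t_v v)>0$, which we know. The genuinely useful estimate is the other one: $I(t_v v)\le\frac{t_v^2}{2}(1-\int\alpha v^2)$ \emph{combined with} an upper bound $t_v\le\tau_m$ — but that is the definition of $\tau_m$ as an \emph{infimum}, so it fails pointwise. Therefore I expect the actual argument must exploit that for $u\in\mathcal S_{\chi(\eta)}$ one has a two-sided control $t_u\le \tau_m^{-1}\cdot(\text{something})$, or — more likely — that the relevant supremum in \eqref{finito} is built so that one only needs $t_v=\tau_m$ for the \emph{specific} minimizing $v$, i.e. the infimum defining $\tau_m$ is attained at $v$. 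I would therefore: (1) show the infimum $\tau_m=\inf_{\mathcal{S}_{\chi(\eta)}}t_u$ is attained (finite-dimensional sphere, $u\mapsto t_u$ continuous by Proposition \ref{proposition2}$(A_3)$-type continuity, $\mathcal{S}_{\chi(\eta)}$ compact); (2) let $v_0$ be a minimizer, so $t_{v_0}=\tau_m$ and $v_0\in\mathcal{S}_{\mathcal{A}}$; (3) estimate
$$
c_{\mathcal{N}}\le \Psi(v_0)=I(\tau_m v_0)\le\frac{\tau_m^2}{2}\Big(1-\int_\Omega\alpha(x)v_0^2\,dx\Big)\le\frac{\tau_m^2}{2}\Big(\int_\Omega(\eta-\alpha)v_0^2\,dx\Big)\le\frac{\tau_m^2}{2}\cdot\frac{1}{\lambda_1(\eta-\alpha)},
$$
where the third step uses $\|v_0\|^2=1$ and the fourth the variational characterization $\int(\eta-\alpha)w^2\le\|w\|^2/\lambda_1(\eta-\alpha)$ (valid since $(\eta-\alpha)^+\neq 0$, which follows from $\eta^+\neq 0$ and the eigenvalue ordering in $(f_2)$: $\lambda_m(\eta)<1<\lambda_1(\alpha)$ forces $\int\eta e_m^2>\int\alpha e_m^2$ on some eigenfunction, so $\lambda_1(\eta-\alpha)$ exists and is positive). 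Then combine with the lower bound from Lemma \ref{limita} and \eqref{finito} to conclude $c_{\mathcal{N}}<\inf_{\partial\mathcal{S}_{\mathcal{A}}}\int_{[u\neq0]}\beta\,dx$.

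The main obstacle is exactly step (3)'s first inequality $\Psi(v_0)\le\frac{\tau_m^2}{2}(1-\int\alpha v_0^2)$: it requires that $h_{v_0}(t)\le\frac{t^2}{2}(1-\int\alpha v_0^2)$ for all $t$, and in particular at $t=t_{v_0}=\tau_m$, which is precisely Lemma \ref{lemma2}$(ii)$ (the map $t\mapsto F(x,t)/t^2$ is increasing away from $0$ with limit $\alpha(x)/2$, so $F(x,tv_0)\ge\frac{\alpha(x)}{2}t^2v_0^2$). I will also need to justify carefully that $\tau_m$ is attained and that $\mathcal{S}_{\chi(\eta)}\subset\mathcal{S}_{\mathcal{A}}$ (the latter is noted in the proof of Proposition \ref{proposition2}$(A_1)$), and that $\lambda_1(\eta-\alpha)$ is well-defined and positive — a short argument using $(f_2)$ and \cite{DeFig}. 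Everything else is bookkeeping.
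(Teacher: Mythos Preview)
Your final argument (after the exploratory dead ends) is correct and coincides with the paper's proof: both obtain $c_{\mathcal{N}}\le \tau_m^{2}/[2\lambda_1(\eta-\alpha)]$ and combine this with Lemma~\ref{limita} and \eqref{finito}. One simplification the paper makes: it does not show $\tau_m$ is attained; instead, for \emph{every} $u\in\mathcal{S}_{\mathcal{A}}$ it writes $\Psi(u)=\int_\Omega\big[\tfrac12 f(x,t_u u)t_u u-F(x,t_u u)\big]\,dx$ (via the Nehari identity) and bounds the integrand pointwise by $\tfrac12(\eta-\alpha)(t_u u)^2$ using $(f_1)$, obtaining $c_{\mathcal{N}}\le\Psi(u)<t_u^{2}/[2\lambda_1(\eta-\alpha)]$, and then simply takes the infimum over $u\in\mathcal{S}_{\chi(\eta)}$.
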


\begin{proof}

It follows from $(f_{1})$ that, for each $u\in\mathcal{S}_{\mathcal{A}}$,
\begin{eqnarray}\nonumber\label{uuuuu}
c_{\mathcal{N}}\leq\Psi(u)&=&\int_{\Omega}\left[ \frac{1}{2}f(x, t_{u}u)t_{u}u-F(x, t_{u}u)\right]dx\\ \nonumber
&<&(1/2)\int_{\Omega}[\eta(x)-\alpha(x)]m(u)^{2} dx\\
&\leq & [1/2\lambda_{1}(\eta-\alpha)]t_{u}^{2}.
\end{eqnarray}

\medskip

On the other hand, by \eqref{limita}, for each $u\in\partial\mathcal{S}_{\mathcal{A}}$
\begin{equation}\label{vvvvv}
\int_{[u\neq 0]}\beta(x) dx\geq |[u\neq 0]|\inf\operatorname{ess} \limits_{x\in\Omega}\beta(x)\geq \left( S(\Omega)/|\eta|_{\infty}\right)^{N/2}\inf\operatorname{ess} \limits_{x\in\Omega}\beta(x).
\end{equation}
The result follows now from \eqref{finito}, \eqref{uuuuu} and \eqref{vvvvv}.
\end{proof}

\medskip

Based in previous results, in the next illustration, we try to give an idea about some possible topological configuration of the sets $\mathcal{N}$, $\mathcal{A}$ and $S_\mathcal{A}$. The set $\mathcal{A}$ is represented as the nonempty interior of a ``cone'' in $H_{0}^{1}(\Omega)$ which intersects the unit sphere in the set $S_\mathcal{A}$ and contains the Nehari set $\mathcal{N}$. As stated in Propositions \ref{proposition2} and \ref{main1}, the Nehari set is homeomorphic to $S_\mathcal{A}$, unbounded and asymptote the boundary of $\mathcal{A}$ at infinity.

\newpage

\begin{figure}[h!]
\centering
\includegraphics[width=8cm, height=11cm]{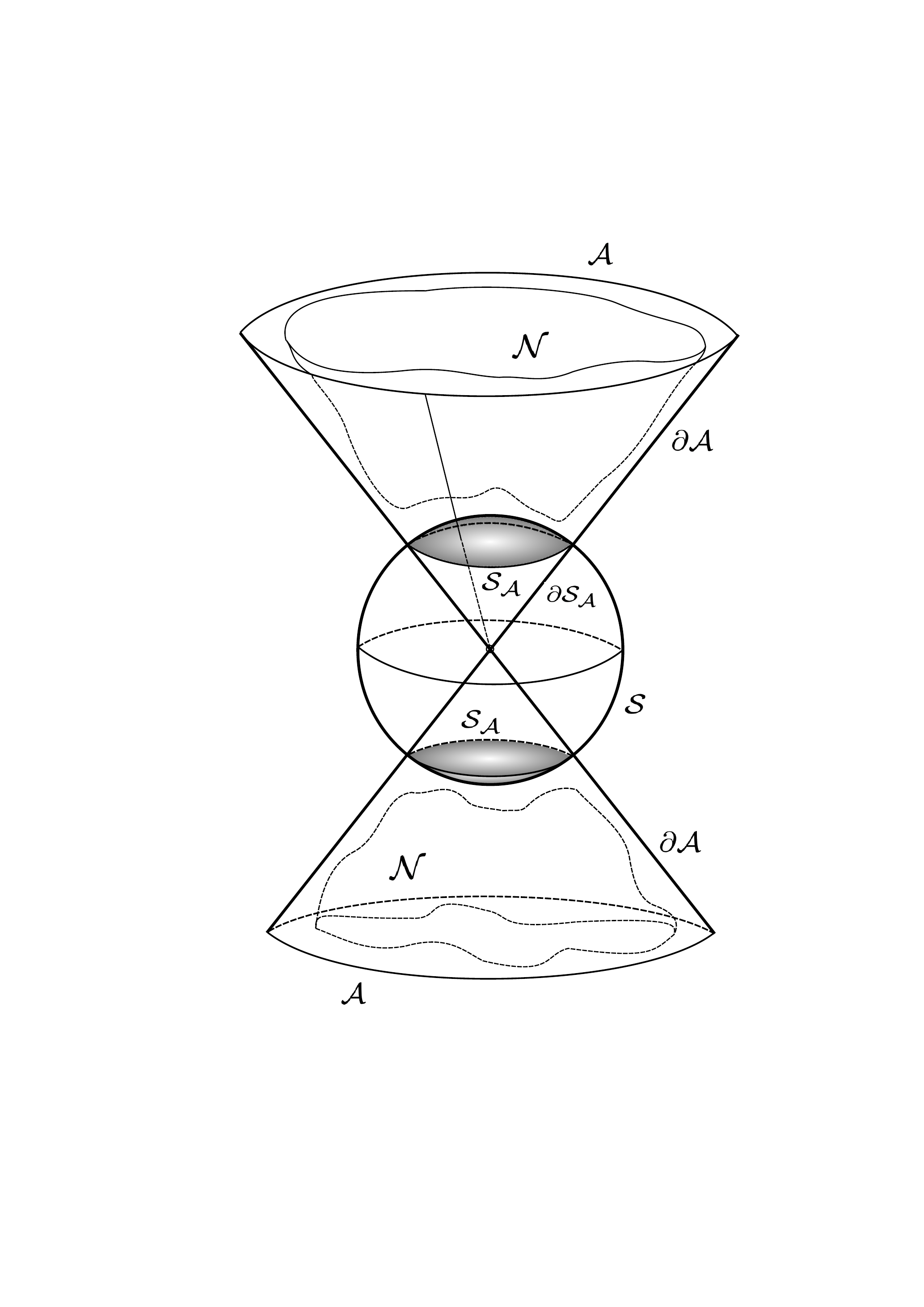}
\caption{The Nehari set $\mathcal{N}$ and the manifold $S_\mathcal{A}$}
\label{Nehari}
\end{figure}

\medskip

Before stating next result, we remember hypotheses $(NRC)$ and $(RC)$ stated in the introduction. 

\medskip

\begin{enumerate}
\item[$(NRC)$] $\lambda_{m+k}(\eta)\neq 1$ for all $k\in\N$;

\item[$(RC)$] $\lambda_{m+k}(\eta)= 1$ for some $k\in\N$.
\end{enumerate}

\medskip

\begin{proposition}\label{main2}
Suppose that $f$ satisfies $(f_{1})-(f_{2})$.
\begin{enumerate}
\item[$(i)$] If $(NRC)$ holds, then $\Psi$ satisfies the $(PS)_{c}$ condition in $\mathcal{S}_{\mathcal{A}}$, for all $c\geq c_{\mathcal{N}}$;
\item[$(ii)$] If $(RC)$ and \eqref{finito} hold, then $\Psi$ satisfies the $(PS)_{c}$ condition in $\mathcal{S}_{\mathcal{A}}$, for all $c\in [c_{\mathcal{N}}, \inf_{u\in \partial\mathcal{S}_{\mathcal{A}}}\int_{[u\neq 0]}\beta(x) dx)$.
\end{enumerate}
\end{proposition}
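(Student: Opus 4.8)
The plan is, for a value $c$ in the prescribed range and a sequence $\{u_n\}\subset\mathcal{S}_{\mathcal{A}}$ with $\Psi(u_n)\to c$ and $\Psi'(u_n)\to 0$, to extract a subsequence converging in $\mathcal{S}_{\mathcal{A}}$. The argument has three steps: (a) $\{u_n\}$ stays uniformly away from $\partial\mathcal{S}_{\mathcal{A}}$; (b) consequently $\{v_n:=m(u_n)\}\subset\mathcal{N}$ is a \emph{bounded} $(PS)_c$ sequence for $I$; (c) the usual compactness of $I$ yields a strongly convergent subsequence whose limit is carried back into $\mathcal{S}_{\mathcal{A}}$ by $m^{-1}$.

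\emph{Step (a).} Suppose, along a subsequence, $\dist(u_n,\partial\mathcal{S}_{\mathcal{A}})\to 0$. By Proposition \ref{main1} we may assume $u_n\rightharpoonup u$ with $u\neq 0$, $t_{u_n}\to\infty$, $\int_\Omega\eta(x)u^2\,dx=1$, and $\liminf_n\Psi(u_n)\ge\int_{[u\neq 0]}\beta(x)\,dx$. Since $\{v_n:=m(u_n)=t_{u_n}u_n\}$ is a $(PS)_c$ sequence for $I$ (Proposition \ref{proposition3}$(iii)$), $I'(v_n)\to 0$ in $H^{-1}(\Omega)$; dividing by $\|v_n\|=t_{u_n}\to\infty$, using $(f_1)$ (under which $|f(x,t)|\le C_0|t|$), the asymptotic linearity of $f$ at infinity (which forces $f(x,t)/t\to\eta(x)$) and the generalized dominated convergence theorem, one passes to the limit in $I'(v_n)\varphi/\|v_n\|=\int\nabla u_n\nabla\varphi-\int[f(x,v_n)/\|v_n\|]\varphi$ to obtain that $u$ solves $-\Delta u=\eta(x)u$ weakly, i.e.\ $1=\lambda_j(\eta)$ for some $j$. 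In case $(NRC)$ this is already a contradiction: $(f_2)$ gives $\lambda_m(\eta)<1$, so $j>m$, contradicting $\lambda_{m+k}(\eta)\neq 1$ for all $k$. In case $(RC)$ such an eigenfunction is admissible, but elliptic regularity gives $u\in C^1(\overline\Omega)$, so $[u\neq 0]$ is open and nonempty; using a ball $B\subset[u\neq 0]$ and a highly concentrated bump supported in $B$ one produces a function of $\mathcal{S}_{\mathcal{A}}^c$ supported in $[u\neq 0]$, and joining it inside $\mathcal{S}$ to $u/\|u\|\in\overline{\mathcal{S}_{\mathcal{A}}}$ by a path of functions supported in $[u\neq 0]$, the intermediate-value property of $w\mapsto\int_\Omega\eta w^2$ yields $v\in\partial\mathcal{S}_{\mathcal{A}}$ with $[v\neq 0]\subseteq[u\neq 0]$. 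Since $\beta\ge 0$, this gives $c=\liminf_n\Psi(u_n)\ge\int_{[u\neq 0]}\beta\,dx\ge\int_{[v\neq 0]}\beta\,dx\ge\inf_{w\in\partial\mathcal{S}_{\mathcal{A}}}\int_{[w\neq 0]}\beta\,dx>c$, a contradiction. (Hypothesis \eqref{finito} and Lemma \ref{level} are not needed for the compactness analysis itself; they only ensure, via $c_{\mathcal{N}}<\inf_{w\in\partial\mathcal{S}_{\mathcal{A}}}\int_{[w\neq 0]}\beta\,dx$, that the admissible range of $c$ in $(ii)$ is nonempty.) Hence in both cases $\dist(u_n,\partial\mathcal{S}_{\mathcal{A}})\ge\delta>0$ along a subsequence.

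\emph{Step (b).} With $\{u_n\}$ bounded away from $\partial\mathcal{S}_{\mathcal{A}}$, $\{v_n=m(u_n)\}\subset\mathcal{N}$ is a $(PS)_c$ sequence for $I$ with $\|v_n\|=t_{u_n}$. I claim $\{t_{u_n}\}$ is bounded: otherwise $t_{u_n}\to\infty$ along a subsequence, and from the Nehari identity $1=\int_\Omega[f(x,t_{u_n}u_n)/(t_{u_n}u_n)]u_n^2\chi_{[u_n\neq 0]}\,dx$, the convergence $u_n\rightharpoonup u$ and $(f_1)$ one gets $\int_\Omega\eta(x)u^2\,dx=1$; then a small perturbation of $u_n$ inside $\mathcal{S}$ along eigenfunctions of $-\Delta$ of high index (on which $\int_\Omega\eta(\cdot)^2$ is arbitrarily small, since $|\psi_j|_2^2=1/\lambda_j\to 0$) exhibits elements of $\partial\mathcal{S}_{\mathcal{A}}$ at distance $o(1)$ from $u_n$, contradicting Step (a). Thus $\{v_n\}$ is bounded in $H_0^1(\Omega)$.

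\emph{Step (c) and the main obstacle.} A bounded $(PS)_c$ sequence for $I$ is precompact: writing $I'=\mathrm{Id}-K$ with $K$ the composition of the Nemytskii map $u\mapsto f(\cdot,u)$ with $(-\Delta)^{-1}$, which is compact because $|f(x,t)|\le C_0|t|$ and $H_0^1(\Omega)\hookrightarrow L^2(\Omega)$ compactly, one gets $v_n\to v$ strongly; hence $I'(v)=0$, and $\|v\|=\lim_n t_{u_n}>0$ since $t_u$ is bounded away from $0$ on $\mathcal{S}_{\mathcal{A}}$ (as in the proof of Proposition \ref{proposition2}$(A_1)$), so $v\in\mathcal{N}\subset\mathcal{A}$ and $u_n=v_n/\|v_n\|\to v/\|v\|=m^{-1}(v)\in\mathcal{S}_{\mathcal{A}}$. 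The heart of the matter is Step (a): the essential inputs are the converse character of Proposition \ref{main1} (approach to $\partial\mathcal{S}_{\mathcal{A}}$ forces $t_{u_n}\to\infty$, hence an eigenfunction in the limit) and, in the resonant case, the fact that this limiting eigenfunction is classical and therefore supported on an \emph{open} set, which is precisely what allows one to produce a competitor on $\partial\mathcal{S}_{\mathcal{A}}$ of no larger support and thereby strictly exceed the level $c$.
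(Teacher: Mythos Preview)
Your proof has the right ingredients, but it is organized in a roundabout way and Step~(b) contains a genuine gap.

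In Step~(a), case $(RC)$: once you know that $u$ solves $-\Delta u=\eta(x)u$ weakly and (from the \emph{proof} of Proposition~\ref{main1}, not its statement) that $\int_\Omega\eta u^2=1$, the eigenfunction identity gives $\|u\|^2=\int_\Omega\eta u^2=1$, so $u$ itself lies in $\partial\mathcal{S}_{\mathcal{A}}$. The elliptic-regularity/path/IVT construction producing an auxiliary $v\in\partial\mathcal{S}_{\mathcal{A}}$ supported in $[u\neq 0]$ is therefore unnecessary; you may simply take $v=u$.

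Step~(b) is the real weak point. You want ``$t_{u_n}\to\infty$ forces $\dist(u_n,\partial\mathcal{S}_{\mathcal{A}})\to 0$''; your justification (``a small perturbation of $u_n$ along high-index eigenfunctions exhibits elements of $\partial\mathcal{S}_{\mathcal{A}}$ at distance $o(1)$'') is only heuristic. A rigorous version requires, for each $n$, a path on $\mathcal{S}$ starting at $u_n$ along which $w\mapsto\int_\Omega\eta w^2$ crosses the value $1$ at a parameter tending to $0$; controlling this parameter uniformly in $n$ needs bounds on the cross term $\int_\Omega\eta u_n\psi$ and on the gap $1-\int_\Omega\eta\psi^2$, and the direction $\psi$ may have to depend on $n$. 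This can be done, but your sketch does not supply it.

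The paper avoids both detours by not decoupling (a) and (b). It assumes directly $\|m(u_n)\|=t_{u_n}\to\infty$, uses $I'(m(u_n))\to 0$ to show the weak limit $v$ of $u_n$ is an eigenfunction, and then, in case $(RC)$, rules out $\int_\Omega\eta v^2<1$ by an energy-blowup argument: were that inequality true, the identity
\[
\Psi(u_n)=t_{u_n}^{2}\left\{\frac12-\int_\Omega\frac{F(x,t_{u_n}u_n)}{(t_{u_n}u_n)^2}\,u_n^2\,dx\right\}
\]
would force $\Psi(u_n)\to\infty$, contradicting $\Psi(u_n)\to c$. Hence $\|v\|=1$, so $u_n\to v$ strongly with $v\in\partial\mathcal{S}_{\mathcal{A}}$, and Proposition~\ref{main1} yields $c\geq\int_{[v\neq 0]}\beta\,dx$, the desired contradiction. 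In effect the paper merges your two steps into one, replacing the perturbation argument by a direct comparison of $\Psi(u_n)$ with its leading quadratic term; you could repair your proof the same way.
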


\begin{proof} 
By Proposition \ref{proposition2}$(A_{3})$ and Proposition \ref{proposition3}$(iii)$, it is enough to show that $I$ satisfies the $(PS)_{c}$ condition on $\mathcal{N}$ for $c\in [c_{\mathcal{N}}, \inf_{u\in\partial\mathcal{S}_{\mathcal{A}}}\int_{[u\neq 0]}\beta(x) dx)$. For this, let $\{u_{n}\}\subset \mathcal{N}$ be a $(PS)_{c}$ sequence for the functional $I$. We are going to prove that $\{u_{n}\}$ is bounded in $H_{0}^{1}(\Omega)$. In fact, suppose by contradiction that, up to a subsequence, $\|u_{n}\|\to\infty$. Define $v_{n}:=u_{n}/\|u_{n}\|=m^{-1}(u_{n})\in \mathcal{S}_{\mathcal{A}}$. Thus $\{v_{n}\}$ is bounded in $H_{0}^{1}(\Omega)$ and 

\begin{equation}\label{bound}
\Psi(v_{n})\to c.
\end{equation} 

\medskip

Consequently, there exists $v\in H_{0}^{1}(\Omega)$ such that 
\begin{equation}\label{weak}
v_{n}\rightharpoonup v \ \mbox{in \ $H_{0}^{1}(\Omega)$}.
\end{equation}

\medskip

Suppose $v=0$. Since $\{\Psi(v_{n})\}$ is bounded, it follows that there exists $C>0$ such that
\begin{equation}\label{6000}
C>\Psi(v_{n})=I(t_{v_{n}}v_{n})\geq I(tv_{n})=(1/2)t^2-\int_{\Omega}F(x, tv_{n})dx, \ \forall \ t>0.
\end{equation}
From $(f_{1})-(f_{2})$ and compact embedding, passing to the limit as $n\to\infty$ in $(\ref{6000})$, we get 
$$
C\geq (1/2)t^{2}, \ \forall \ t>0,
$$
a clear contradiction. Thereby, we conclude that 
\begin{equation}\label{notnull2}
v\neq 0.
\end{equation} 

\medskip

Now, since $\{u_{n}\}\subset\mathcal{N}$ is a $(PS)_{c}$ sequence for $I$, we get
$$
o_{n}(1)+\int_{\Omega}\nabla u_{n}\nabla w dx=\int_{\Omega}f(x, u_{n})w dx, \ \forall \ w\in H_{0}^{1}(\Omega).
$$
Dividing last equality by $\|u_{n}\|$, we have
\begin{eqnarray}\nonumber\label{cuenta}
&&o_{n}(1)+\int_{\Omega}\nabla v_{n}\nabla w dx=\\
&&\int_{[v\neq 0]}\left[\frac{f(x, \|u_{n}\|v_{n})}{\|u_{n}\|v_{n}}\right]\chi_{[v_{n}\neq 0]}(x)v_{n}w dx+\int_{[v= 0]}\left[\frac{f(x, \|u_{n}\|v_{n})}{\|u_{n}\|v_{n}}\right]\chi_{[v_{n}\neq 0]}(x)v_{n}w dx.
\end{eqnarray}

\medskip

By $(f_{1})$, \eqref{weak} and Lebesgue Dominated Convergence Theorem, it follows that
\begin{equation}\label{ahora}
\int_{[v= 0]}\left[\frac{f(x, \|u_{n}\|v_{n})}{\|u_{n}\|v_{n}}\right]\chi_{[v_{n}\neq 0]}(x)v_{n}w dx\to 0.
\end{equation}

\medskip

On the other hand, by \eqref{weak} and Lebesgue Dominated Convergence Theorem, we get
\begin{equation}\label{salio}
\int_{[v\neq 0]}\left[\frac{f(x, \|u_{n}\|v_{n})}{\|u_{n}\|v_{n}}\right]\chi_{[v_{n}\neq 0]}(x)v_{n}w dx\to \int_{\Omega}\eta(x)vw dx.
\end{equation}
It follows from \eqref{ahora} and \eqref{salio} that, passing to the limit as $n\to\infty$ in \eqref{cuenta}, we obtain
\begin{equation}\label{eigen}
\int_{\Omega}\nabla v\nabla w dx=\int_{\Omega}\eta(x)vw dx, \ \forall \ w\in H_{0}^{1}(\Omega).
\end{equation}

\medskip

Now we have to consider two cases:

\medskip

$(i)$ If $\lambda_{m+k}(\eta)\neq 1$, for all $k\in\N$, it follows from \eqref{eigen} that $v=0$. But this is a contradiction with \eqref{notnull2}. Therefore $\{u_{n}\}$ is bounded in $H_{0}^{1}(\Omega)$.

\medskip

$(ii)$ If $\lambda_{m+k}(\eta)= 1$, for some $k\in\N$, then \eqref{eigen} implies that $v = e_{m+k}$, where $e_{m+k}$ is some eigenfunction associated to $\lambda_{m+k}(\eta)$. From \eqref{eigen}, it follows also that 
$$
\int_{\Omega}\eta(x)v^{2}dx=\|v\|^{2}\leq \liminf_{n\to\infty}\|v_{n}\|^{2}=1,
$$
that is, $v\in \mathcal{S}_{\mathcal{A}}^{c}$. Suppose that 
\begin{equation}
\int_{\Omega}\eta(x)v^{2}dx<1.
\label{alphainfinity1}
\end{equation}
Since
\begin{equation}\label{equal3}
t_{v_{n}}=\|t_{v_{n}}v_{n}\|=\|u_{n}\|\to\infty,
\end{equation}
by arguing as in \eqref{ahora} and \eqref{salio}, we obtain
\begin{equation}\label{listo}
\int_{\Omega}\left[\frac{F(\|u_{n}\|v_{n})}{(\|u_{n}\|v_{n})^2}\right]v_{n}^{2}dx\to (1/2)\int_{\Omega}\eta(x)v^{2}dx.
\end{equation}
Thus, passing to the limit as $n\to\infty$ in the identity
$$
\Psi(v_{n})=\|u_{n}\|^{2}\left\{\frac{1}{2}-\int_{\Omega}\left[\frac{F(\|u_{n}\|v_{n})}{(\|u_{n}\|v_{n})^2}\right]v_{n}^{2}dx\right\}
$$
and using \eqref{listo}, we conclude that 
$$
\Psi(v_{n})\to\infty,
$$ 
what is a contradiction with \eqref{bound}. Consequently, 
\begin{equation}\label{boundary}
1=\|v\|^{2}=\int_{\Omega}\eta(x)v^{2}dx,
\end{equation}
showing that $v\in\partial\mathcal{S}_{\mathcal{A}}$ and
\begin{equation}\label{norm}
\|v_{n}\|\to \|v\|.
\end{equation}
By using \eqref{weak} and \eqref{norm}, we conclude that $v_{n}\to v$ in $H_{0}^{1}(\Omega)$. By invoking Proposition \ref{main1}, we have
\begin{equation}\label{end}
c\geq \int_{[v\neq 0]}\beta(x) dx,
\end{equation}
with $v=e_{m+k}\in\partial\mathcal{S}_{\mathcal{A}}$. Since $(f_1)-(f_2)$ and \eqref{finito} hold, last equality contradicts Lemma \ref{level}, showing that $\{u_{n}\}$ is bounded. 

\medskip

Since $\{u_{n}\}$ is a bounded sequence, there exists $u\in H_{0}^{1}(\Omega)$ such that $u_{n}\rightharpoonup u$ in $H_{0}^{1}(\Omega)$, up to a subsequence. Thus, to finish the proof, it remains us to prove that $\|u_{n}\|\to \|u\|$. For this, it is sufficient to note that since $\{u_{n}\}$ is a $(PS)_{c}$ sequence, we have
$$
o_{n}(1)+\int_{\Omega}\nabla u_{n}\nabla u dx=\int_{\Omega}f(x, u_{n})u dx.
$$
Passing to the limit as $n\to\infty$ in the previous equality, we get
\begin{equation}\label{equal5}
\|u\|^{2}=\int_{\Omega}f(x, u)u dx.
\end{equation}
Then \eqref{equal5} and Lebesgue's convergence theorem imply that
$$
\|u_n\|^2 = \int_\Omega f(x,u_n)u_ndx = \int_\Omega f(x,u)udx + o_n(1) = \|u\|^2 + o_n(1).
$$
\end{proof}


\section{Multiplicity of solutions}\label{sec:multiplicity}

In this section, among other things, we are going to prove that the problem \eqref{P} has as many pairs of solutions as we want, provided that function $f$ is odd with respect to the second variable and the size of $\lambda_{m}(\eta)$ or $\lambda_{m}(\alpha)$, for $m$ large enough, is controlled from above. 

\medskip

Our main results in this section will be proved through the Krasnosel'skii's genus theory.  Thus, we start by defining some preliminaries notations:
$$
\gamma_{j}:=\left\{B\in \mathcal{E}: B\subset \mathcal{S}_{\mathcal{A}} \ \mbox{and} \ \gamma(B)\geq j\right\},
$$
where 
$$
\mathcal{E}=\{B\subset H_{0}^{1}(\Omega)\backslash\{0\}: B \ \mbox{is closed and $B=-B$}\}
$$ 
and $\gamma:\mathcal{E}\to \mathds{Z}\cup \{\infty\}$ is the Krasnosel'skii's genus function, which is defined by

\medskip

\begin{equation}
\gamma(B)=\left \{ \begin{array}{ll}
n:=\min\{ m\in\N: \ \mbox{there exists an odd map $\varphi\in C(B, \R^{m}\backslash\{0\})$}\},\\
\infty, \mbox{if there exists no map $\varphi\in C(B, \R^{m}\backslash\{0\})$},\\
0, \mbox{if $B=\emptyset$}.
\end{array}\right.
\end{equation}

\medskip

It is important to point that $\gamma_{j}$ is well defined, since $\mathcal{S}_{\mathcal{A}}=-\mathcal{S}_{\mathcal{A}}$.

\medskip

In the sequel we state some standard properties of the genus which will play a role in this work. More information about this
subject can be found, for instance, in \cite{AR} or \cite{Kr}.

\medskip

\begin{lemma}\label{genus}
Let $B$ and $C$ be sets in $\mathcal{E}$.

\medskip

\begin{enumerate}

\item[$(i)$] If $x\neq 0$, then $\gamma(\{x\}\cup \{-x\})=1$;

\item[$(ii)$]  If there exists an odd map $\varphi\in C(B, C)$, then $\gamma(B)\leq \gamma(C)$. In particular, if $B\subset C$ then $\gamma(B)\leq \gamma(C)$.

\item[$(iii)$]  If there exists an odd homeomorphism $\varphi:B\rightarrow C$, then $\gamma(B)=\gamma(C)$. In particular, if $B$ is homeomorphic to the unit sphere in $\R^{n}$, then $\gamma(B)=n$.

\item[$(iv)$]  If $B$ is a compact set, then there exists a neighborhood $K\in \mathcal{E}$ of $B$ such that
$\gamma(B)=\gamma(K)$.

\item[$(v)$]  If $\gamma(C)<\infty$, then
$\gamma(\overline{B\backslash C})\geq \gamma(B)-\gamma(C)$.

\item[$(vi)$]  If $\gamma(A) \geq 2$, then $A$ has infinitely many points.
\end{enumerate}
\end{lemma}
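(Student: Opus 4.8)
The plan is to establish each of the six items directly from the definition of $\gamma$, by producing or composing odd continuous maps into punctured Euclidean spaces; none of this uses the specific structure of problem \eqref{P}, only elementary topology together with the Borsuk--Ulam theorem, so I will only indicate the constructions. The items split naturally into four that are one-line observations and two that require a genuine extension argument.

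For $(i)$ I would note that on the discrete two-point set $\{x\}\cup\{-x\}$ the assignment $x\mapsto 1,\ -x\mapsto -1$ is automatically continuous, odd, and $\R\setminus\{0\}$-valued, so $\gamma\le 1$; nonemptiness forces $\gamma\ge 1$. For $(ii)$, if $m=\gamma(C)<\infty$ and $\psi\in C(C,\R^{m}\setminus\{0\})$ is odd, then $\psi\circ\varphi$ witnesses $\gamma(B)\le m$ (and there is nothing to prove when $\gamma(C)=\infty$); item $(iii)$ is then immediate by applying $(ii)$ to $\varphi$ and to $\varphi^{-1}$, and the sphere statement follows because any odd $\varphi\in C(\mathcal{S}^{n-1},\R^{k}\setminus\{0\})$ with $k<n$ would contradict Borsuk--Ulam while the identity gives the reverse bound. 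For $(vi)$, a finite symmetric set not containing $0$ has the form $\{\pm x_{1},\dots,\pm x_{n}\}$ with the $x_{i}$ distinct and non-antipodal, and $x_{i}\mapsto 1,\ -x_{i}\mapsto -1$ is an odd element of $C(A,\R\setminus\{0\})$, giving $\gamma(A)\le 1$, contrary to $\gamma(A)\ge 2$.

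The two items needing real work are $(iv)$ and $(v)$, and both hinge on the same symmetrized-extension trick. For $(iv)$ I would take $m=\gamma(B)$ and an odd $\varphi\in C(B,\R^{m}\setminus\{0\})$, extend it to $\widetilde{\varphi}\in C(H_{0}^{1}(\Omega),\R^{m})$ by Tietze, antisymmetrize via $u\mapsto\tfrac12(\widetilde{\varphi}(u)-\widetilde{\varphi}(-u))$ (which still agrees with $\varphi$ on $B$), and use compactness of $B$ to produce a symmetric open neighborhood on which the antisymmetrized extension is nonvanishing; letting $K$ be the closure of a slightly smaller such neighborhood yields $K\in\mathcal{E}$ with $0\notin K$ on which the extension is $\R^{m}\setminus\{0\}$-valued, hence $\gamma(K)\le m=\gamma(B)$, while $B\subset K$ and $(ii)$ give $\gamma(B)\le\gamma(K)$, so equality. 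For $(v)$, with $j=\gamma(\overline{B\setminus C})$ and $k=\gamma(C)$ both finite, I would choose odd maps $\varphi_{1}\in C(\overline{B\setminus C},\R^{j}\setminus\{0\})$ and $\varphi_{2}\in C(C,\R^{k}\setminus\{0\})$, extend both oddly to $H_{0}^{1}(\Omega)$ as above, and set $\psi=(\varphi_{1},\varphi_{2})\colon B\to\R^{j+k}$; since each $u\in B$ lies in $\overline{B\setminus C}$ or in $C$, at least one component of $\psi(u)$ is nonzero, so $\psi$ is an odd, continuous, nonvanishing map and $\gamma(B)\le j+k$.

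The main obstacle is exactly this symmetric-extension step: one must verify that the Tietze extension can be antisymmetrized without destroying continuity and, more delicately, that its non-vanishing persists on a whole neighborhood of $B$ rather than merely on $B$ itself. This is where the compactness hypothesis in $(iv)$ (and the closedness used when invoking the same reasoning inside $(v)$) is essential, since compactness is precisely what upgrades pointwise non-vanishing on $B$ to uniform non-vanishing on a tube around $B$.
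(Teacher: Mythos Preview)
Your proof is correct and follows the classical arguments for the basic properties of Krasnosel'skii genus. However, the paper does not actually prove this lemma: it is stated as a list of ``standard properties of the genus'' with a pointer to \cite{AR} and \cite{Kr} for details, and no proof is given. So there is no ``paper's own proof'' to compare against; you have simply supplied the standard argument that the cited references contain, and nothing in your outline is specific to problem~\eqref{P} or to the particular setting of the paper, which is as it should be.
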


\medskip

From now on, we denote by $s_{m}$ the sum of the dimensions of all eigenspaces $V_{j}$ associated to eigenvalues $\lambda_{j}(\eta)$, where $1\leq j\leq m$. 

\medskip

\begin{lemma}\label{nosso}
Suppose that $f(x,\cdot)$ is odd a.e. in $\Omega$ and satisfies $(f_{2})$. The following statements hold true:
\begin{enumerate}
\item[$(i)$] $\gamma_{s_{m}}\neq \emptyset$;
\item[$(ii)$] $\gamma_{1}\supset\gamma_{2}\supset\ldots\supset\gamma_{s_{m}}$;
\item[$(iii)$] If $\varphi\in C(\mathcal{S}_{\mathcal{A}}, \mathcal{S}_{\mathcal{A}})$ and is odd, then $\varphi(\gamma_j) \subset \gamma_j$, for all $1\leq j\leq s_{m}$;
\item[$(iv)$] If $B\in \gamma_{j}$ and $C\in \mathcal{E}$ with $\gamma(C)\leq s<j\leq s_{m}$, then $\overline{B\backslash C}\in \gamma_{j-s}$.
\end{enumerate}
\end{lemma}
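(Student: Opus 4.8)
\textbf{Proof plan for Lemma \ref{nosso}.}

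The strategy is to verify the four items in order, leaning on the genus properties collected in Lemma \ref{genus} and on the topological description of $\mathcal{S}_{\mathcal{A}}$ obtained in Section \ref{sec:Nehari}. For item $(i)$, I would observe that $\mathcal{S}_{\chi(\eta)}=\mathcal{S}_{s_{m}}$, the unit sphere of the $s_{m}$-dimensional space $\bigoplus_{k=1}^{m}V_{\lambda_{k}(\eta)}$, is contained in $\mathcal{S}_{\mathcal{A}}$ (this inclusion was established inside the proof of Lemma \ref{lemma1}$(v)$ and recalled in Proposition \ref{proposition2}$(A_{1})$), and that it is closed and symmetric, hence belongs to $\mathcal{E}$. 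By Lemma \ref{genus}$(iii)$, $\gamma(\mathcal{S}_{s_{m}})=s_{m}$, so $\mathcal{S}_{s_{m}}\in\gamma_{s_{m}}$ and in particular $\gamma_{s_{m}}\neq\emptyset$. Item $(ii)$ is immediate: if $\gamma(B)\geq j$ then $\gamma(B)\geq j-1$, so $\gamma_{j}\subset\gamma_{j-1}$, giving the descending chain $\gamma_{1}\supset\gamma_{2}\supset\cdots\supset\gamma_{s_{m}}$.

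For item $(iii)$, take $B\in\gamma_{j}$ and an odd continuous map $\varphi\in C(\mathcal{S}_{\mathcal{A}},\mathcal{S}_{\mathcal{A}})$. Then $\varphi(B)\subset\mathcal{S}_{\mathcal{A}}$; moreover $\varphi(B)$ is symmetric because $\varphi$ is odd, and I would need $\varphi(B)$ to be closed — this requires a small argument, e.g. that $B$ is compact (which is the only case that gets used in the applications, since the $\gamma_{j}$ that matter are intersected with sublevel sets of $\Psi$ and compactness comes from the $(PS)$ condition), or a direct check that $\overline{\varphi(B)}$ is still symmetric and contained in $\mathcal{S}_{\mathcal{A}}$ up to the relevant closure. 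Granting $\varphi(B)\in\mathcal{E}$ with $\varphi(B)\subset\mathcal{S}_{\mathcal{A}}$, the restriction $\varphi|_{B}:B\to\varphi(B)$ is an odd continuous surjection, so by the monotonicity property Lemma \ref{genus}$(ii)$ we get $\gamma(\varphi(B))\geq\gamma(B)\geq j$, hence $\varphi(B)\in\gamma_{j}$. This is exactly $\varphi(\gamma_{j})\subset\gamma_{j}$.

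Item $(iv)$ is the genus subtraction estimate in the family language. Given $B\in\gamma_{j}$ and $C\in\mathcal{E}$ with $\gamma(C)\leq s<j\leq s_{m}$, I would apply Lemma \ref{genus}$(v)$ to obtain $\gamma(\overline{B\backslash C})\geq\gamma(B)-\gamma(C)\geq j-s$. It remains to check that $\overline{B\backslash C}$ is an admissible competitor for $\gamma_{j-s}$, i.e. that it is closed (automatic), symmetric (both $B$ and $C$ are symmetric, hence so is $B\backslash C$ and its closure), contained in $\mathcal{S}_{\mathcal{A}}$, and bounded away from $0$; since $B\subset\mathcal{S}_{\mathcal{A}}$ and $\mathcal{S}_{\mathcal{A}}\subset\mathcal{S}$ lies on the unit sphere, $B\backslash C\subset\mathcal{S}_{\mathcal{A}}$ and, because $\mathcal{S}_{\mathcal{A}}$ need not be closed, one should take the closure relative to $\mathcal{S}_{\mathcal{A}}$ (or note that in the intended application $B$ is compact so $\overline{B\backslash C}\subset B\subset\mathcal{S}_{\mathcal{A}}$). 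Then $\overline{B\backslash C}\in\gamma_{j-s}$.

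The main obstacle I anticipate is the bookkeeping in items $(iii)$ and $(iv)$ around \emph{closedness}: $\mathcal{S}_{\mathcal{A}}$ is only a noncomplete $C^{1}$-manifold (Lemma \ref{lemma1}), so one must be careful that $\varphi(B)$ and $\overline{B\backslash C}$ do not pick up points of $\partial\mathcal{S}_{\mathcal{A}}$ and still qualify as elements of $\mathcal{E}$ sitting inside $\mathcal{S}_{\mathcal{A}}$. The clean way around this is to restrict attention to compact $B$ (which is what is available when these families are used against the deformation/$(PS)$ machinery), so that closures cause no trouble; everything else is a routine transcription of the standard genus properties of Lemma \ref{genus}.
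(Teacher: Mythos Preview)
Your proposal is correct and follows essentially the same route as the paper: item $(i)$ via $\mathcal{S}_{s_m}\subset\mathcal{S}_{\mathcal{A}}$ and Lemma \ref{genus}$(iii)$, item $(ii)$ trivially, item $(iii)$ from Lemma \ref{genus}$(ii)$, and item $(iv)$ from Lemma \ref{genus}$(v)$. The paper's proof is a one-liner for each part and does not address the closedness bookkeeping you flag in $(iii)$ and $(iv)$; your remark that compactness of $B$ (which is how these families are actually used later) resolves those issues is a genuine refinement over what the paper writes.
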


\medskip

\begin{proof}
$(i)$ Let $\mathcal{S}_{s_{m}}$ be the ($s_{m}$-dimensional) unit sphere of $V_{1}\oplus V_{2}\oplus\ldots\oplus V_{m}$. From $(f_{2})$, it is clear that $\mathcal{S}_{s_{m}}\subset \mathcal{S}_{\mathcal{A}}$. Moreover, from Lemma \ref{genus}$(iii)$, we have $\gamma(\mathcal{S}_{s_{m}})=s_{m}$, showing that $\mathcal{S}_{s_{m}}\in \gamma_{s_{m}}$. $(ii)$ It is immediate. $(iii)$ It follows directly from Lemma \ref{genus}$(ii)$. $(iv)$ It is a consequence of Lemma \ref{genus}$(v)$.
\end{proof}

\medskip

Now, for each $1\leq j\leq s_{m}$, we define the following minimax levels
\begin{equation}
c_{j}=\inf_{B\in \gamma_{j}}\sup_{u\in B}\Psi(u).
\end{equation}

\medskip

\begin{lemma}\label{minimax}
Suppose that $f(x,\cdot)$ is odd a.e. in $\Omega$, satisfies $(f_{1})-(f_{2})$ and \eqref{finito}. Then,
$$
0< c_{\mathcal{N}}=c_{1}\leq c_{2}\leq \ldots\leq c_{s_{m}}< \inf_{u\in\partial\mathcal{S}_{\mathcal{A}}}\int_{[u\neq 0]}\beta(x) dx.
$$
\end{lemma}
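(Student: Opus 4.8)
The plan is to establish the chain of inequalities from left to right, treating the two endpoints as the substantive claims and the middle inequalities as essentially formal.

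First I would handle the strict positivity $0 < c_{\mathcal{N}}$ and the identification $c_{\mathcal{N}} = c_1$. By Remark \ref{rem2} we know $c_{\mathcal{N}} \geq 0$; to upgrade this to strict positivity it suffices to show $c_{\mathcal{N}}$ is achieved, and this is where Proposition \ref{main2} and Lemma \ref{level} enter: since $c_{\mathcal{N}} < \inf_{u\in\partial\mathcal{S}_{\mathcal{A}}}\int_{[u\neq 0]}\beta(x)dx$ by Lemma \ref{level}, the functional $\Psi$ satisfies $(PS)_{c_{\mathcal{N}}}$ on $\mathcal{S}_{\mathcal{A}}$ (Proposition \ref{main2}), and a minimizing sequence for $\Psi$ kept away from $\partial\mathcal{S}_{\mathcal{A}}$ (again using Lemma \ref{level} together with Proposition \ref{main1} to prevent escape to the boundary) yields a minimizer; by Remark \ref{rem2} this minimizer has positive energy, so $c_{\mathcal{N}} > 0$. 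For $c_{\mathcal{N}} = c_1$: by Proposition \ref{proposition3}$(iv)$, $c_{\mathcal{N}} = \inf_{u\in\mathcal{S}_{\mathcal{A}}}\Psi(u)$; on the one hand every singleton pair $\{u\}\cup\{-u\} = \{\pm u\}$ has genus $1$ by Lemma \ref{genus}$(i)$, so $\{\pm u\}\in\gamma_1$ and hence $c_1 \leq \sup_{v\in\{\pm u\}}\Psi(v) = \Psi(u)$ (using that $\Psi$ is even, since $f(x,\cdot)$ is odd implies $F(x,\cdot)$ is even); taking the infimum over $u$ gives $c_1 \leq c_{\mathcal{N}}$. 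Conversely, any $B\in\gamma_1$ is a nonempty subset of $\mathcal{S}_{\mathcal{A}}$, so $\sup_{u\in B}\Psi(u) \geq \inf_{\mathcal{S}_{\mathcal{A}}}\Psi = c_{\mathcal{N}}$, giving $c_1 \geq c_{\mathcal{N}}$.

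Next, the monotonicity $c_1 \leq c_2 \leq \cdots \leq c_{s_m}$ is immediate from Lemma \ref{nosso}$(ii)$: since $\gamma_{j+1}\subset\gamma_j$, the infimum defining $c_{j+1}$ is taken over a smaller family, hence $c_{j+1}\geq c_j$. One should also note $\gamma_{s_m}\neq\emptyset$ by Lemma \ref{nosso}$(i)$, so $c_{s_m}$ is a genuine (finite, as the next step shows) real number rather than $+\infty$.

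Finally, for the strict upper bound $c_{s_m} < \inf_{u\in\partial\mathcal{S}_{\mathcal{A}}}\int_{[u\neq 0]}\beta(x)dx$, I would exhibit a single competitor set $B\in\gamma_{s_m}$ whose $\Psi$-sup is strictly below this threshold. The natural choice is $B = \mathcal{S}_{s_m}$, the unit sphere of $V_1\oplus\cdots\oplus V_m$ (which lies in $\mathcal{S}_{\mathcal{A}}$ and has genus $s_m$ by Lemma \ref{nosso}$(i)$ / Lemma \ref{genus}$(iii)$). Then $c_{s_m}\leq \sup_{u\in\mathcal{S}_{s_m}}\Psi(u)$, and the point is to bound this supremum: for $u\in\mathcal{S}_{s_m}\subset\mathcal{S}_{\chi(\eta)}$ the estimate \eqref{uuuuu} from the proof of Lemma \ref{level} gives $\Psi(u) < [1/(2\lambda_1(\eta-\alpha))]\,t_u^2$, and since $\mathcal{S}_{s_m}$ is compact, $t_u \leq \zeta_{\mathcal{S}_{s_m}} < \infty$ by Proposition \ref{proposition2}$(A_2)$; more precisely, on $\mathcal{S}_{\chi(\eta)}$ one has the uniform bound $t_u \leq \tau_m$ is a lower bound — here I must be careful: $\tau_m = \inf t_u$, so to get an upper bound on the sup I would instead argue directly as in Lemma \ref{level} that $\sup_{u\in\mathcal{S}_{s_m}}\Psi(u) \leq [1/(2\lambda_1(\eta-\alpha))]\sup_{u\in\mathcal{S}_{s_m}}t_u^2$ and combine with \eqref{finito}, which was precisely engineered so that $[1/(2\lambda_1(\eta-\alpha))]\tau_m^2 < (S(\Omega)/|\eta|_\infty)^{N/2}\,\mathrm{ess\,inf}\,\beta \leq \inf_{u\in\partial\mathcal{S}_{\mathcal{A}}}\int_{[u\neq 0]}\beta\,dx$ (the last step being \eqref{vvvvv}). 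The main obstacle is exactly this last step: one needs the relevant $t_u$'s on the competitor set $\mathcal{S}_{s_m}$ to be controlled by $\tau_m$, which requires either restricting attention to the sphere $\mathcal{S}_{\chi(\eta)}$ on which $\tau_m$ is defined and checking $\mathcal{S}_{s_m}\subseteq\mathcal{S}_{\chi(\eta)}$, or else replicating the computation in Lemma \ref{level} with the correct supremum; reconciling the direction of the inequality (infimum $\tau_m$ versus a supremum of $t_u$) is the delicate bookkeeping point, and I expect it hinges on the observation that on $\mathcal{S}_{\chi(\eta)}$ the map $t_\cdot$ together with hypothesis \eqref{finito} is calibrated to make the argument close.
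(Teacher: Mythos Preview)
Your outline tracks the paper's proof almost step for step: the paper cites Lemma~\ref{lemma3} (with Remark~\ref{rem2}) for $0<c_{\mathcal{N}}$, Lemma~\ref{genus}$(i)$ together with the definition of $c_1$ for $c_{\mathcal{N}}=c_1$, Lemma~\ref{nosso}$(ii)$ for the monotonicity, and then uses the competitor $B=\mathcal{S}_{s_m}$ together with the bound \eqref{uuuuu} from the proof of Lemma~\ref{level} for the final strict inequality. Two small remarks. First, your argument for strict positivity via achievement of $c_{\mathcal{N}}$ is more than the paper does at this point; it simply invokes Lemma~\ref{lemma3} and Remark~\ref{rem2}, deferring the actual achievement to Theorem~\ref{teo1}. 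Second, the sets $\mathcal{S}_{s_m}$ and $\mathcal{S}_{\chi(\eta)}$ coincide (by definition $s_m=\chi(\eta)$ and both are the unit sphere of $\oplus_{k=1}^{m}V_{\lambda_k(\eta)}$), so your worry about the inclusion $\mathcal{S}_{s_m}\subseteq\mathcal{S}_{\chi(\eta)}$ is moot.

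The obstacle you flag in the last step is real, and the paper does \emph{not} resolve it either. The paper simply writes
\[
c_{s_m}\leq \max_{u\in\mathcal{S}_{s_m}}\Psi(u)\leq \frac{\tau_m^{2}}{2\lambda_{1}(\eta-\alpha)},
\]
but since $\tau_m=\inf_{u\in\mathcal{S}_{\chi(\eta)}}t_u$ by Proposition~\ref{proposition2}$(A_1)$, the pointwise bound $\Psi(u)<[1/(2\lambda_1(\eta-\alpha))]\,t_u^{2}$ from \eqref{uuuuu} only yields $\max_{\mathcal{S}_{s_m}}\Psi\leq [1/(2\lambda_1(\eta-\alpha))]\max_{\mathcal{S}_{s_m}}t_u^{2}$, not the displayed inequality with $\tau_m^{2}$. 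The argument closes cleanly only if $\tau_m$ is read as $\max_{u\in\mathcal{S}_{\chi(\eta)}}t_u$ (which is indeed finite by Proposition~\ref{proposition2}$(A_2)$ since $\mathcal{S}_{\chi(\eta)}$ is compact); with that reading, hypothesis \eqref{finito} combined with \eqref{vvvvv} gives exactly the chain you sketch. So your diagnosis is correct: the bookkeeping hinges on whether $\tau_m$ denotes the infimum or the supremum of $t_u$ over $\mathcal{S}_{\chi(\eta)}$, and the paper's stated definition (infimum) does not match what its own proof of Lemma~\ref{minimax} uses.
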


\medskip

\begin{proof}
$(i)$ First inequality follows from Lemma \ref{lemma3} (see also Remark \ref{rem2}). The equality $c_{\mathcal{N}}=c_{1}$ can be easily derivate from Lemma \ref{genus}$(i)$ and from definition of $c_{1}$. On the other hand, the monotonicity of the levels $c_{j}$ is a consequence of Lemma \ref{nosso}$(ii)$. To prove last inequality, observe that by proof of Lemma \ref{level}, we have
$$
\Psi(u)\leq [1/2\lambda_{1}(\eta-\alpha)]t_{u}^{2}, \ \forall \ u\in \mathcal{S}_{\mathcal{A}}.
$$
Thus, 
\begin{equation}\label{zzz}
c_{s_{m}}\leq\max_{u\in \mathcal{S}_{s_{m}}}\Psi(u)\leq [1/2\lambda_{1}(\eta-\alpha)]\tau_{m}^{2},
\end{equation}
where $\tau_{m}$ was defined in Proposition \ref{proposition2}. The result follows now from Lemma \ref{limita}, \eqref{finito} and \eqref{zzz}.
\end{proof}

\medskip

Next proposition is crucial to ensure the multiplicity of solutions.

\medskip

\begin{proposition}\label{multmais}
Suppose that $f(x,\cdot)$ is odd a.e. in $\Omega$ and satisfies $(f_{1})-(f_{2})$. If $c_{j}=\ldots=c_{j+p}\equiv c$, $j+p\leq s_{m}$ and $(NRC)$, or $(RC)$ and \eqref{finito}, occurs then $\gamma(K_{c})\geq p+1$, where $K_{c}:=\{v\in \mathcal{S}_{\mathcal{A}}: \Psi(v)=c\ \ \mbox{and} \ \Psi'(v)=0\}$.
\end{proposition}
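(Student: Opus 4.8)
The plan is to follow the standard deformation argument from genus theory, adapted to the noncomplete manifold $\mathcal{S}_{\mathcal{A}}$. First I would argue by contradiction: assume $\gamma(K_c)\leq p$. Since $f(x,\cdot)$ is odd, $\Psi$ is an even $C^1$-functional on $\mathcal{S}_{\mathcal{A}}$ and $K_c$ is a symmetric set; moreover, by Proposition \ref{main2} (which applies because either $(NRC)$ holds, or $(RC)$ and \eqref{finito} hold, and, crucially, $c \leq c_{s_m} < \inf_{u\in\partial\mathcal{S}_{\mathcal{A}}}\int_{[u\neq 0]}\beta(x)\,dx$ by Lemma \ref{minimax}), the functional $\Psi$ satisfies $(PS)_c$ on $\mathcal{S}_{\mathcal{A}}$. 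In particular $K_c$ is compact, so by Lemma \ref{genus}$(iv)$ there is a symmetric open neighborhood $U$ of $K_c$ with $\gamma(\overline{U}) = \gamma(K_c) \leq p$.

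Next I would invoke the standard equivariant deformation lemma: using that $\Psi$ satisfies $(PS)_c$, is even, and that $c$ lies strictly below the level $\inf_{u\in\partial\mathcal{S}_{\mathcal{A}}}\int_{[u\neq 0]}\beta(x)\,dx$ (so that, by Proposition \ref{main1}, a minimizing/deformation flow stays away from $\partial\mathcal{S}_{\mathcal{A}}$ — this is exactly where noncompleteness of $\mathcal{S}_{\mathcal{A}}$ must be controlled), there exist $\varepsilon>0$ and an odd continuous map $\varphi \in C(\mathcal{S}_{\mathcal{A}}, \mathcal{S}_{\mathcal{A}})$ with
$$
\varphi\big(\Psi^{c+\varepsilon}\setminus U\big) \subset \Psi^{c-\varepsilon},
$$
where $\Psi^{a} := \{u\in\mathcal{S}_{\mathcal{A}} : \Psi(u)\leq a\}$. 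Since $c_{j+p} = c$, pick $B \in \gamma_{j+p}$ with $\sup_{u\in B}\Psi(u) < c+\varepsilon$, i.e. $B \subset \Psi^{c+\varepsilon}$. By Lemma \ref{nosso}$(iv)$, $\overline{B\setminus \overline{U}} \in \gamma_{j+p-p} = \gamma_j$. Applying $\varphi$ and using Lemma \ref{nosso}$(iii)$, $\varphi(\overline{B\setminus\overline{U}}) \in \gamma_j$, while $\varphi(\overline{B\setminus\overline{U}}) \subset \Psi^{c-\varepsilon}$. Hence
$$
c_j \leq \sup_{u\in\varphi(\overline{B\setminus\overline{U}})}\Psi(u) \leq c-\varepsilon < c = c_j,
$$
a contradiction. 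Therefore $\gamma(K_c) \geq p+1$.

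The main obstacle will be justifying the deformation lemma on the noncomplete manifold $\mathcal{S}_{\mathcal{A}}$: one must ensure that the negative (pseudo-)gradient flow of $\Psi$ starting from sublevel sets near level $c$ does not escape through $\partial\mathcal{S}_{\mathcal{A}}$ in finite time. This is precisely controlled by Lemma \ref{level} together with Proposition \ref{main1}: since $c < \inf_{u\in\partial\mathcal{S}_{\mathcal{A}}}\int_{[u\neq 0]}\beta(x)\,dx$, any flow line approaching $\partial\mathcal{S}_{\mathcal{A}}$ would force $\Psi$ up toward a value $\geq \inf_{u\in\partial\mathcal{S}_{\mathcal{A}}}\int_{[u\neq 0]}\beta(x)\,dx > c+\varepsilon$, so the flow remains in a region bounded away from the boundary, where the usual construction (partition of unity, pseudo-gradient vector field, $(PS)_c$) goes through verbatim and can be made odd by averaging. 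The remaining steps — evenness of $\Psi$, compactness of $K_c$, and the bookkeeping with $\gamma_j$ — are routine consequences of the results already established.
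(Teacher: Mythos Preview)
Your proposal is correct and follows essentially the same approach as the paper: a contradiction argument via genus, compactness of $K_c$ from the $(PS)_c$ condition (Proposition \ref{main2}), a symmetric neighborhood of the same genus (Lemma \ref{genus}$(iv)$), an equivariant deformation (the paper cites Struwe's Theorem 3.11 and Remark 3.12), and the standard genus bookkeeping via Lemma \ref{nosso}$(iii)$--$(iv)$. Your treatment of the noncompleteness of $\mathcal{S}_{\mathcal{A}}$---arguing that a flow line reaching $\partial\mathcal{S}_{\mathcal{A}}$ would, by Proposition \ref{main1} and Lemma \ref{minimax}, force $\Psi$ above $c+\varepsilon$, contradicting monotonicity along the flow---is exactly the mechanism the paper uses.
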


\medskip

\begin{proof}
Suppose that $\gamma(K_{c})\leq p$. It follows from Proposition \ref{main2} and Lemma \ref{minimax}$(i)$ that $K_{c}$ is a compact set. Thus, by Lemma \ref{genus}$(iv)$, there exists a compact neighborhood $K\in \mathcal{E}$ of $K_{c}$ such that $\gamma(K)\leq p$. Defining $M:=K\cap \mathcal{S}_{\mathcal{A}}$, we derive from Lemma \ref{genus}$(ii)$ that $\gamma(M)\leq p$. Despite the noncompleteness of $\mathcal{S}_{\mathcal{A}}$ we still can use Theorem 3.11 in \cite{Str} (see also Remark 3.12 in \cite{Str}) to ensure the existence of an odd homeomorphisms family $\eta(., t)$ of $\mathcal{S}_{\mathcal{A}}$ such that, for each $u\in \mathcal{S}_{\mathcal{A}}$, 
\begin{equation}\label{iguald}
\eta(u, 0)=u
\end{equation} 
and
\begin{equation}\label{traj}
t\mapsto\Psi(\eta(u, t)) \ \mbox{is non-increasing}.
\end{equation}

\medskip

Observe that, although $\mathcal{S}_{\mathcal{A}}$ is non-complete, from Proposition \ref{main1}, Lemma \ref{minimax} and \eqref{traj}, for $\varepsilon>0$ small enough, map $\eta(u, .)$ is well defined in $[0,\infty)$, for each $u\in \Psi_{c_{s_{m}}+\varepsilon}=\{u\in \mathcal{S}_{\mathcal{A}}: \Psi(u)\leq c_{s_{m}}+\varepsilon\}$. Indeed, suppose by contradiction that $\eta(u, t_{0})\in \partial\mathcal{S}_{\mathcal{A}}$ for some $u\in \Psi_{c_{s_{m}}+\varepsilon}$ and $t_{0}>0$, where $\varepsilon\in (0, \inf_{u\in \partial\mathcal{S}_{\mathcal{A}}}\int_{[u\neq 0]}\beta(x) dx- c_{s_{m}})$. Then, by \eqref{iguald}, Proposition \ref{main1} and Lemma \ref{minimax}
$$
\Psi(\eta(u, 0))=\Psi(u)\leq c_{s_{m}}+\varepsilon<\int_{[\eta(u, t_{0})\neq 0]}\beta(x) dx\leq \liminf\operatorname{ess} \limits_{t\to t_{0}}\Psi(\eta(u, t)).
$$
Thus, there exists $0<t_{\ast}<t_{0}$, such that $\eta(u, t_{\ast})\in \mathcal{S}_{\mathcal{A}}$ and
$$
\Psi(\eta(u, 0))<\Psi(\eta(u, t_{\ast})).
$$
However, last inequality contradicts \eqref{traj}. Thus, third claim of Theorem 3.11 in \cite{Str} holds, namely,
\begin{equation}\label{def}
\eta(\Psi_{c+\varepsilon}\backslash M, 1)\subset \Psi_{c-\varepsilon}.
\end{equation}
Let us choose $B\in \gamma_{j+p}$ such that $\sup_{B}\Psi\leq c+\varepsilon$. From Lemma \ref{nosso}$(iv)$, $\overline{B\backslash M}\in\gamma_{j}$. It follows again from Lemma \ref{nosso}$(iii)$ that $\eta(\overline{B\backslash M}, 1)\in \gamma_{j}$. Therefore, by \eqref{def} and definition of c, we have
$$
c\leq \sup_{\eta(\overline{B\backslash M}, 1)}\Psi\leq c-\varepsilon,
$$
that is a contradiction. Then $\gamma(K_{c})\geq p+1$.
\end{proof}

\medskip

We are now ready to prove the following two multiplicity results.

\medskip

\begin{theorem}\label{teo1}
Suppose that $f$ satisfies $(f_{1})-(f_{2})$ and $(NRC)$, or $(RC)$ and \eqref{finito}. The following statements hold:\\
$(i)$ If $m=1$, then there exists a signed mountain pass ground-state solution for problem \eqref{P}.\\
$(ii)$ If $f(x,\cdot)$ is odd a.e. in $\Omega$, then problem \eqref{P} has at least $\chi(\eta)$ pairs of nontrivial solutions with positive energy.
\end{theorem}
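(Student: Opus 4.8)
The plan is to deduce Theorem \ref{teo1} from the machinery built in Sections \ref{sec:prelim}--\ref{sec:multiplicity}, treating the two parts separately but both via the homeomorphism $m:\mathcal{S}_{\mathcal{A}}\to\mathcal{N}$ and the associated $C^1$-functional $\Psi$ on $\mathcal{S}_{\mathcal{A}}$.

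\medskip

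\textbf{Part $(i)$ (ground state when $m=1$).} First I would take a minimizing sequence $\{u_n\}\subset\mathcal{S}_{\mathcal{A}}$ for $\Psi$, so that $\Psi(u_n)\to c_{\mathcal{N}}$. Using Proposition \ref{main1} together with Lemma \ref{level} (whose hypotheses $(f_1)$--$(f_2)$ and \eqref{finito} are in force in the resonant case, while in the non-resonant case the $(PS)$ range is unrestricted by Proposition \ref{main2}$(i)$), one sees that $\{u_n\}$ stays away from $\partial\mathcal{S}_{\mathcal{A}}$: if $\dist(u_n,\partial\mathcal{S}_{\mathcal{A}})\to0$ along a subsequence, then $\liminf\Psi(u_n)\ge\inf_{u\in\partial\mathcal{S}_{\mathcal{A}}}\int_{[u\neq0]}\beta(x)\,dx>c_{\mathcal{N}}$, a contradiction. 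Hence $\inf_n\dist(u_n,\partial\mathcal{S}_{\mathcal{A}})>0$, and on a neighbourhood of $\{u_n\}$ the manifold behaves like a genuine (locally complete) $C^1$-manifold, so Ekeland's variational principle applies: we may assume $\Psi'(u_n)\to0$, i.e. $\{u_n\}$ is a $(PS)_{c_{\mathcal{N}}}$ sequence. By Proposition \ref{main2} (using $(NRC)$, or $(RC)$ with \eqref{finito} and the fact that $c_{\mathcal{N}}<\inf_{u\in\partial\mathcal{S}_{\mathcal{A}}}\int_{[u\neq0]}\beta(x)\,dx$ from Lemma \ref{level}), $\Psi$ satisfies $(PS)_{c_{\mathcal{N}}}$, so $u_n\to u$ in $\mathcal{S}_{\mathcal{A}}$ with $\Psi(u)=c_{\mathcal{N}}$ and $\Psi'(u)=0$. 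By Proposition \ref{proposition3}$(iv)$, $w:=m(u)\in\mathcal{N}$ is a nontrivial critical point of $I$ with $I(w)=c_{\mathcal{N}}>0$ (Remark \ref{rem2}), hence a ground state. To see $w$ is a mountain pass solution one notes from Proposition \ref{proposition3}$(iv)$ that $c_{\mathcal{N}}=\inf_{u\in\mathcal{A}}\max_{t>0}I(tu)$ coincides with the mountain pass level, the geometry being provided by $\lim_{t\to0}h_u(t)/t^2>0$ and $\lim_{t\to\infty}h_u(t)=-\infty$ from Proposition \ref{proposi\~c\~ao1}. Finally, to see $w$ is \emph{signed}, I would argue that $|w|\in\mathcal{N}$ with $I(|w|)=I(w)=c_{\mathcal{N}}$ (since $F(x,\cdot)$ is even-ish only when $f$ is odd; more robustly, when $m=1$ the eigenfunction $e_1$ of $\lambda_1(\eta)$ does not change sign, and the sign-definite structure forces the minimizer to be of one sign — this is the standard Nehari-minimizer-is-of-one-sign argument, replacing $w$ by $|w|$ if necessary and invoking the strong maximum principle after noting $-\Delta|w|\ge0$ is not available directly, so instead one writes $w=w^+-w^-$ and uses that $t_{w^+}w^+, t_{w^-}w^-\in\mathcal{N}$ each have energy $\le c_{\mathcal{N}}$, forcing one of $w^\pm$ to vanish).

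\medskip

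\textbf{Part $(ii)$ (multiplicity when $f(x,\cdot)$ is odd).} Here I would run the genus argument. By Lemma \ref{minimax}, the minimax levels $c_1\le c_2\le\cdots\le c_{s_m}$ are well defined, positive, and strictly below $\inf_{u\in\partial\mathcal{S}_{\mathcal{A}}}\int_{[u\neq0]}\beta(x)\,dx$, so $\Psi$ satisfies $(PS)_{c_j}$ for every $j\le s_m$ by Proposition \ref{main2}. A standard deformation argument (or directly Proposition \ref{multmais}) shows each $c_j$ is a critical value of $\Psi$, and that whenever $c_j=\cdots=c_{j+p}$ the critical set $K_{c_j}$ has genus $\ge p+1$, in particular is infinite when $p\ge1$. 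Counting distinct critical values with multiplicity, we obtain at least $s_m$ pairs $\pm v$ of critical points of $\Psi$ on $\mathcal{S}_{\mathcal{A}}$ (if some levels coincide, Lemma \ref{genus}$(vi)$ gives infinitely many, hence more than enough). Via Proposition \ref{proposition3}$(iv)$, each such pair $\pm v$ yields a pair $\pm m(v)$ of nontrivial critical points of $I$ with $I(m(v))=\Psi(v)>0$, i.e. positive energy. Since $s_m=\chi(\eta)=\sum_{j=1}^m\dim V_{\lambda_j(\eta)}$ by the notational conventions fixed in Section \ref{sec:prelim}, this produces at least $\chi(\eta)$ pairs of nontrivial solutions with positive energy, completing the proof.

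\medskip

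\textbf{Main obstacle.} The delicate point is the same in both parts: controlling minimizing / Palais--Smale sequences near the non-complete boundary $\partial\mathcal{S}_{\mathcal{A}}$. For part $(i)$ this is what legitimises the use of Ekeland's principle on $\mathcal{S}_{\mathcal{A}}$, and for part $(ii)$ it is what makes the deformation $\eta(\cdot,t)$ of Proposition \ref{multmais} stay inside $\mathcal{S}_{\mathcal{A}}$; in both cases the key quantitative input is the gap estimate $c_{s_m}<\inf_{u\in\partial\mathcal{S}_{\mathcal{A}}}\int_{[u\neq0]}\beta(x)\,dx$ of Lemma \ref{level}, combined with the lower bound $|[u\neq0]|\ge(S(\Omega)/|\eta|_\infty)^{N/2}$ on $\partial\mathcal{S}_{\mathcal{A}}$ from Lemma \ref{limita}. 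A secondary subtlety for part $(i)$ is establishing that the ground state is genuinely of one sign rather than merely nontrivial — here one must exploit $m=1$ so that the relevant eigenfunction is sign-definite and invoke the Nehari splitting $w=w^+-w^-$ as indicated above.
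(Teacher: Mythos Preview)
Your overall architecture and Part $(ii)$ match the paper's proof. There are, however, two genuine issues in Part $(i)$.

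\textbf{The Ekeland step.} Your claim that ``on a neighbourhood of $\{u_n\}$ the manifold behaves like a genuine (locally complete) $C^1$-manifold, so Ekeland's variational principle applies'' is not a proof: Ekeland requires a complete metric space, and knowing that one particular minimizing sequence stays away from $\partial\mathcal{S}_{\mathcal{A}}$ does not by itself furnish such a space on which $c_{\mathcal{N}}$ is still the infimum. The paper handles this differently and more carefully: it extends $\Psi$ to a map $\Upsilon:\overline{\mathcal{S}}_{\mathcal{A}}\to\R\cup\{\infty\}$ by setting $\Upsilon(u)=\int_{[u\neq 0]}\beta(x)\,dx$ on $\partial\mathcal{S}_{\mathcal{A}}$, proves $\Upsilon$ is lower semicontinuous (this is where Proposition \ref{main1} is actually used), and then applies Ekeland on the complete space $\overline{\mathcal{S}}_{\mathcal{A}}$. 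Lemma \ref{level} then guarantees that the Ekeland points $\widehat v_n$ land back in $\mathcal{S}_{\mathcal{A}}$, so one can differentiate. Your idea can be salvaged (work on the closed set $\{u\in\mathcal{S}_{\mathcal{A}}:\dist(u,\partial\mathcal{S}_{\mathcal{A}})\ge\delta\}$ for suitable $\delta$ and check interiority of the Ekeland points), but as written it is a gap.

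\textbf{The sign argument.} Your sketch has the inequality reversed and takes an unnecessary detour. The role of $m=1$, sign-definite eigenfunctions, and the projections $t_{w^\pm}w^\pm$ is irrelevant here. The paper's argument is direct: since $w$ is a \emph{critical point} of $I$, testing $I'(w)$ against $w^{\pm}$ gives $\|w^{\pm}\|^2=\int_\Omega f(x,w^{\pm})w^{\pm}\,dx$, so if both $w^{\pm}\neq 0$ then $w^{\pm}\in\mathcal{N}$ already (no $t_{w^\pm}$ needed), hence $I(w^{\pm})\ge c_{\mathcal{N}}$, not $\le c_{\mathcal{N}}$. Combined with $I(w)=I(w^+)+I(w^-)$ this yields $c_{\mathcal{N}}\ge 2c_{\mathcal{N}}$, contradicting $c_{\mathcal{N}}>0$. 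Your stated inequality ``each have energy $\le c_{\mathcal{N}}$'' would not force anything to vanish.
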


\begin{proof}

$(i)$ Let $\{u_{n}\}\subset \mathcal{N}$ be such that $I(u_{n})\to c_{\mathcal{N}}$. Remember that $v_{n}:=u_{n}/\|u_{n}\|\in \mathcal{S}_{\mathcal{A}}$ (see Proposition \ref{proposition2}$(A_{3})$) and 
\begin{equation}\label{aca}
\Psi(v_{n})\to c_{\mathcal{N}}.
\end{equation}

\medskip

We are going to prove the existence of a sequence $\{\hat{v}_{n}\}$ which is a $(PS)_{c_{\mathcal{N}}}$ sequence for the functional $\Psi$. For this, let $\overline{\mathcal{S}}_{\mathcal{A}}$ be the closure of $\mathcal{S}_{\mathcal{A}}$ in $H_{0}^{1}(\Omega)$ and consider the map $\Upsilon:\overline{\mathcal{S}}_{\mathcal{A}}\to\R\cup\{\infty\}$ defined by 

\medskip

\begin{equation}
\Upsilon(u)=\left \{ \begin{array}{ll}
\Psi(u) & \mbox{if $u\in \mathcal{S}_{\mathcal{A}}$,}\\
\int_{[u\neq 0]}\beta(x) dx & \mbox{if $u\in\partial\mathcal{S}_{\mathcal{A}}$.}
\end{array}\right.
\end{equation}

\medskip

It follows from Lemma \ref{level} that $c_{\mathcal{N}}=\inf_{u\in \overline{\mathcal{S}}_{\mathcal{A}}}\Upsilon(u)$. 

\medskip

Let us show that $\Upsilon$ is lower semicontinuous. In fact, let $\{u_{n}\}\subset \overline{\mathcal{S}}_{\mathcal{A}}$ and $u\in \overline{\mathcal{S}}_{\mathcal{A}}$ be such that $u_{n}\to u$ in $H_{0}^{1}(\Omega)$. If $u\in \mathcal{S}_{\mathcal{A}}$ then, for $n$ large enough, $\Upsilon(u_{n})=\Psi(u_{n})$ and
$$
\Upsilon(u_{n})=\Psi(u_{n})\to\Psi(u)=\Upsilon(u),
$$
because $\Psi$ is continuous. On the other hand, if $u\in \partial\mathcal{S}_{\mathcal{A}}$, we have two cases to consider. If there exists a subsequence $\{u_{n}\}\subset \mathcal{S}_{\mathcal{A}}$ then, by Proposition \ref{main1},
$$
\Upsilon(u)=\int_{[u\neq 0]}\beta(x) dx\leq \liminf\limits_{n\to\infty}\Psi(u_{n})=\liminf\limits_{n\to\infty}\Upsilon(u_{n}).
$$
If there exists a subsequence $\{u_{n}\}\subset \partial\mathcal{S}_{\mathcal{A}}$ then, from $\beta(x)\geq 0$ and
$$
\chi_{[u_n\neq 0]}(x)\to 1 \ \mbox{a.e. in $[u\neq 0]$},
$$ 
we have
$$
\Upsilon(u)=\int_{[u\neq 0]}\beta(x) dx\leq \liminf\limits_{n\to\infty}\int_{[u_{n}\neq 0]}\beta(x) dx=\liminf\limits_{n\to\infty}\Upsilon(u_{n}).
$$
This, in turn, shows that $\Upsilon$ is a lower semicontinuous map.

\medskip

Since $\overline{\mathcal{S}}_{\mathcal{A}}$ is a complete metric space (with metric provided by the norm of $H_{0}^{1}(\Omega)$) and $\Upsilon$ is bounded from below (see Lemma \ref{lemma3}), it follows from Theorem 1.1 in \cite{Ek} that for each $\varepsilon, \lambda>0$ 
small enough and $u\in \Upsilon^{-1}[c_\mathcal{N},c_\mathcal{N} + \varepsilon]$ there exists $v\in \overline{\mathcal{S}}_{\mathcal{A}}$ such that
$$
c_{\mathcal{N}}\leq \Upsilon(v)\leq \Upsilon(u), \  \|u-v\|\leq \lambda \ \mbox{and} \ \Upsilon(w)>\Upsilon(v)-(\varepsilon/\lambda)\|v-w\|, \ \forall \ w\neq v.
$$

\medskip

On the other hand, it follows from Lemma \ref{level} that, for $\varepsilon$ small enough,
\begin{equation}\label{equali}
\Upsilon^{-1}[c_\mathcal{N},c_\mathcal{N} + \varepsilon]=\Psi^{-1}[c_\mathcal{N},c_\mathcal{N} + \varepsilon], v\in \mathcal{S}_{\mathcal{A}} \ \mbox{and} \ \Upsilon(v)=\Psi(v).
\end{equation} 
Passing to a subsequence, it follows from \eqref{aca} that we can choose $u=v_{n}$, $\varepsilon=1/n^{2}$ and $\lambda=1/n$ to get
$\widehat{v}_{n}\in \mathcal{S}_{\mathcal{A}}$, satisfying
\begin{equation}\label{PS1}
\Psi(\widehat{v}_{n})\to c_{\mathcal{N}}, \ \|v_{n}- \widehat{v}_{n}\|\to 0
\end{equation}
and
\begin{equation}\label{deriv}
\Upsilon(w)>\Psi(\widehat{v}_{n})-(1/n)\|\widehat{v}_{n}-w\|, \ \forall \ w\neq \widehat{v}_{n}.
\end{equation}

\medskip

Let $\gamma_{n}: (-\delta_{n}, \delta_{n})\to \mathcal{S}_{\mathcal{A}}$ be a differentiable curve, with $\delta_{n}>0$ small enough, such that 
$\gamma_{n}(0)=\widehat{v}_{n}$ and $\gamma'_{n}(0)=z\in T_{\widehat{v}_{n}}(\mathcal{S}_{\mathcal{A}})$. 
Choosing $w=\gamma_{n}(t)$, it follows from $(\ref{deriv})$ that
\begin{equation}\label{deriv1}
-[\Psi(\gamma_{n}(t))-\Psi(\gamma_{n}(0))]<(1/n)\|\gamma_{n}(t)-\gamma_{n}(0)\|.
\end{equation}
By Mean Value Theorem, there exists $c\in (0, t)$, such that
\begin{equation}
 \|\gamma_{n}(t)-\gamma_{n}(0)\|
\leq \|  \gamma'_{n}(c)\||t|. 
\label{deriv2}
\end{equation}

\medskip

It follows from \eqref{deriv1} and \eqref{deriv2} that, multiplying both sides by $1/t$ and passing to the limit of $t\rig 0^{+}$, we get
\begin{equation}\label{deri1}
-\Psi'(\widehat{v}_{n})z\leq \frac{1}{n}\|z\|.
\end{equation}
Since $z\in  T_{\widehat{v}_{n}}(\mathcal{S}_{\mathcal{A}})$ is arbitrary, by linearity, we have
$$
|\Psi'(\widehat{v}_{n})z|\leq \frac{1}{n}\|z\|.
$$
Therefore,
\begin{equation}\label{PS3}
\|\Psi'(\widehat{v}_{n})\|_{\ast}\to 0,
\end{equation}
as $n\to\infty$, and, by \eqref{PS1}, we conclude that $\{v_{n}\}$ is a $(PS)_{c_{\mathcal{N}}}$ sequence for $\Psi$. It follows from Lemma \ref{level} and Proposition \ref{main2} that there exists $v\in \mathcal{S}_{\mathcal{A}}$ such that, passing to a subsequence, $v_{n}\to v$ in $H_{0}^{1}(\Omega)$. Thus $\Psi'(v)=0$ and $\Psi(v)=c_{\mathcal{N}}$. Defining $u:=m(v)\in\mathcal{N}$ and using Proposition \ref{proposition3}$(iv)$, we conclude that $I'(u)=0$ and $I(u)=c_{\mathcal{N}}$.

\medskip

To show that $u$ does not change sign, observe that if $u^\pm \neq 0$, then $u^{\pm}\in \mathcal{N}$. Thus,
\begin{equation}\label{sign}
c_{\mathcal{N}}=I(u)=I(u^{+})+I(u^{-})\geq 2c_{\mathcal{N}},
\end{equation}
which is a clear contradiction. Therefore, it follows that either $u^+ = 0$ or $u^- = 0$ and, consequently, $u$ is a signed solution.

\medskip

$(ii)$ First of all, note that the levels $0<c_{j}<\infty$ are critical levels of $\Psi$. In fact, suppose by contradiction that $c_{j}$ is regular for some $j$. Invoking Theorem 3.11 in \cite{Str}, with $\beta=c_{j}$, $\overline{\varepsilon}=1$, $N=\emptyset$, there exist $\varepsilon>0$ and a family of odd homeomorphisms $\eta(., t)$ satisfying the properties of the referred theorem. Choosing $B\in \gamma_{j}$ such that $\sup_{B}\Psi<c_{j}+\varepsilon$ and arguing as in the proof of Proposition \ref{multmais} we get a contradiction. 

\medskip

Finally, if levels $c_{j}$, $1\leq j\leq s_{m}$, are different from each other, it follows from Proposition \ref{proposition3}$(iv)$ that the result is proved. On the other hand, if $c_{j}=c_{j+1}\equiv c$ for some $1\leq j\leq s_{m}$, it follows from Proposition \ref{multmais} that $\gamma(K_{c})\geq 2$. Combining last inequality with Lemma \ref{genus}$(vi)$ and Proposition \ref{proposition3}$(iv)$, we conclude that \eqref{P} has infinitely many pairs of nontrivial solutions. The result now is proved.

\end{proof}

\begin{theorem}\label{teo3}
Suppose that $f(x,\cdot)$ is odd a.e. in $\Omega$ and satisfies $(f_{1}')-(f_{2}')$. The following statements hold:\\
$(i)$ If $m=1$, then problem \eqref{P} has a nontrivial solution;\\
$(ii)$ If $f(x,\cdot)$ is odd a.e. in $\Omega$, then problem \eqref{P} has at least $\chi(\alpha)$ pairs of nontrivial solutions with negative energy.
\end{theorem}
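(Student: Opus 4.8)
The plan is to exploit the fact that, under $(f_2')$, the functional $I$ is \emph{coercive} on the whole space $H_0^1(\Omega)$, so that — contrary to Theorem \ref{teo1} — one works directly with $I$ on the complete space $H_0^1(\Omega)$ and never needs the Nehari manifold (which is anyway unavailable, since $(f_1')$ carries no monotonicity). First I would record that from $\lim_{|t|\to\infty}2F(x,t)/t^2=\eta(x)$ uniformly in $x$ together with $(f_1')$ one gets the global estimate $F(x,t)\le \tfrac12\eta(x)t^2+\varepsilon t^2+C_\varepsilon$ for every $\varepsilon>0$; since $\lambda_1(\eta)>1$ yields $\int_\Omega\eta(x)u^2\,dx\le\|u\|^2/\lambda_1(\eta)$ for all $u$, this gives $I(u)\ge\tfrac12\big(1-\tfrac1{\lambda_1(\eta)}-\varepsilon C\big)\|u\|^2-C_\varepsilon$, so $I$ is coercive and bounded below. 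Weak lower semicontinuity would then follow from the compact embedding $H_0^1(\Omega)\hookrightarrow L^2(\Omega)$ and the quadratic growth of $F$, and the Palais--Smale condition is immediate, since a $(PS)$ sequence is bounded by coercivity and $I'(u_n)(u_n-u)\to0$ forces $\|u_n-u\|\to0$ (the term $\int_\Omega f(x,u_n)(u_n-u)\,dx$ vanishes by compactness and the linear growth of $f$). These three facts already give $(i)$: $I$ attains its infimum at some $u_0\in H_0^1(\Omega)$, and $\inf_{H_0^1(\Omega)}I<0$ (see the next paragraph), hence $u_0\neq0$ and $I'(u_0)=0$, i.e. $u_0$ is a nontrivial solution of \eqref{P}.

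For $(ii)$ I would set $W:=\bigoplus_{j=1}^{m}V_{\lambda_j(\alpha)}$, which is finite dimensional with $\dim W=\chi(\alpha)$ and consists of $L^\infty$ functions. For $u=\sum_{j=1}^m u_j\in W$ with $u_j\in V_{\lambda_j(\alpha)}$, orthogonality gives $\|u\|^2=\sum_j\|u_j\|^2$ and $\int_\Omega\alpha(x)u^2\,dx=\sum_j\|u_j\|^2/\lambda_j(\alpha)\ge\|u\|^2/\lambda_m(\alpha)$, so by $(f_2')$ one has $\|u\|^2-\int_\Omega\alpha(x)u^2\,dx\le-\delta_0\|u\|^2$ on $W$ with $\delta_0:=1/\lambda_m(\alpha)-1>0$. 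Since $2F(x,t)/t^2\to\alpha(x)$ uniformly as $t\to0$ and all norms are equivalent on $W$, this yields $I(u)=\tfrac12\big(\|u\|^2-\int_\Omega\alpha u^2\big)+o(\|u\|^2)\le-\tfrac{\delta_0}{4}\|u\|^2$ for $0<\|u\|\le\rho$, with $\rho>0$ small; in particular $\mathcal{W}_\rho:=\{u\in W:\|u\|=\rho\}$ satisfies $\sup_{\mathcal{W}_\rho}I<0$ and $\gamma(\mathcal{W}_\rho)=\chi(\alpha)$ (this also supplies the negativity of $\inf I$ used above). Now, $f(x,\cdot)$ being odd, $I$ is even with $I(0)=0$; for $1\le k\le\chi(\alpha)$ I would set $c_k:=\inf_{A\in\Sigma_k}\sup_{u\in A}I(u)$, where $\Sigma_k$ is the family of compact symmetric subsets of $H_0^1(\Omega)\setminus\{0\}$ with $\gamma(A)\ge k$. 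Testing with $\mathcal{W}_\rho\in\Sigma_{\chi(\alpha)}$ gives $-\infty<\inf_{H_0^1(\Omega)}I\le c_k<0$ for every such $k$. The usual deformation-plus-genus argument — available here because $H_0^1(\Omega)$ is complete, $I\in C^1$ is even and $(PS)$ holds at every level, so it is in fact simpler than Proposition \ref{multmais} — shows that each $c_k$ is a critical value and that $c_k=\cdots=c_{k+p}$ forces $\gamma(K_{c_k})\ge p+1$, where $K_c:=\{u\in H_0^1(\Omega):I(u)=c,\ I'(u)=0\}$. By Lemma \ref{genus}$(vi)$ this yields at least $\chi(\alpha)$ pairs of critical points of $I$ with negative critical value; since $I(0)=0$, each of them is nontrivial, proving $(ii)$.

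The main obstacle is the first step — extracting coercivity, weak lower semicontinuity and the $(PS)$ condition from the comparatively weak assumption $(f_1')$. The delicate point is that $(f_1')$ only controls $F(x,t)/t^2$ on bounded $t$-sets, so the passage to the global bound $F(x,t)\le\tfrac12\eta(x)t^2+\varepsilon t^2+C_\varepsilon$ has to be glued together from the behaviour near $0$ (via $\alpha$), near $\infty$ (via $\eta$) and on the intermediate range $|t|\sim1$ (precisely where $(f_1')$ enters), and one must likewise secure the linear growth of $f$ needed for $I\in C^1$ and for the $(PS)$ compactness. Once these ingredients are in place, the eigenvalue bookkeeping on $W$ and the genus machinery are routine and closely mirror the arguments already developed for Theorem \ref{teo1}.
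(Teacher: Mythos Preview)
Your proposal is correct and follows essentially the same route as the paper: coercivity of $I$ from $\lambda_1(\eta)>1$, a direct minimization for $(i)$, negativity of $I$ on a small sphere in $\bigoplus_{j\le m}V_{\lambda_j(\alpha)}$ from $\lambda_m(\alpha)<1$, and a genus/Clark-type argument for $(ii)$. The only cosmetic difference is that the paper obtains coercivity via a normalized-sequence argument ($v_n=u_n/\|u_n\|\rightharpoonup v$ and dominated convergence) and then quotes Rabinowitz's Theorem~9.1 directly, whereas you extract the pointwise bound $F(x,t)\le\tfrac12\eta(x)t^2+\varepsilon t^2+C_\varepsilon$ and set up the minimax levels $c_k$ by hand; both lead to the same conclusion.
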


\begin{proof}

Since $(f_1')$ and $(f_2')$ are satisfied, we are going to prove that, in any case, $I$ is coercive and bounded from below. For that, let $\{u_{n}\}\subset H_{0}^{1}(\Omega)$ be a sequence with $\|u_{n}\|\to\infty$. If $v_{n}:=u_{n}/\|u_{n}\|$, then
$$
\frac{I(u_{n})}{\|u_{n}\|^{2}}=\frac{1}{2}-\int_{\Omega}\left[\frac{F(x, u_{n})}{\|u_{n}\|^{2}}\right]dx.
$$
Observe that, up to a subsequence, 
$$
v_{n}\rightharpoonup v \ \mbox{in $H_{0}^{1}(\Omega)$},
$$
$$
\chi_{[v_{n}\neq 0]}(x)\to 1 \ \mbox{a.e. in $[v\neq 0]$},
$$
$$
\chi_{[v_{n}\neq 0]}(x)\to 0 \ \mbox{a.e. in $[v= 0]$}
$$
and
$$
\int_{\Omega}\left[\frac{F(x, u_{n})}{\|u_{n}\|^{2}}\right]dx=\int_{[v\neq 0]}\left[\frac{F(x, \|u_{n}\|v_{n})}{\|u_{n}\|^{2}v_{n}^{2}}\right]v_{n}^{2}\chi_{[v_{n}\neq 0]}(x)dx+\int_{[v=0]}\left[\frac{F(x, \|u_{n}\|v_{n})}{\|u_{n}\|^{2}v_{n}^{2}}\right]v_{n}^{2}\chi_{[v_{n}\neq 0]}(x)dx.
$$
Thus, from ($f_1'$) and Lebesgue Dominated Convergence Theorem 
$$
\int_{\Omega}\left[\frac{F(x, u_{n})}{\|u_{n}\|^{2}}\right]dx\to \frac{1}{2}\int_{\Omega}\eta(x)v^{2} dx.
$$
Therefore, by weighted Poincar\'e inequality (see Proposition 1.10 in \cite{DeFig})
$$
\frac{I(u_{n})}{\|u_{n}\|^{2}}\to \frac{1}{2}\left(1-\int_{\Omega}\eta(x)v^{2} dx\right)\geq \frac{1}{2}\left(1-\frac{1}{\lambda_1(\eta)}\|v\|^{2}\right).
$$
Finally, convergence \eqref{6} and the weak lower semicontinuity of the norm imply that $\|v\|\leq 1$. Consequently,
$$
\frac{I(u_{n})}{\|u_{n}\|^{2}}\to \frac{1}{2}\left(1-\int_{\Omega}\eta(x)v^{2} dx\right)\geq \frac{1}{2}\left(1-\frac{1}{\lambda_1(\eta)}\right)>0,
$$
where last inequality comes from $(f_2')$. This proves that $I$ is coercive. Since $f$ is Carathe\'odory, it follows that $I$ is weakly lower semicontinuous. Consequently, $I$ is bounded from below and it has a minimum point $u_{\ast}\in H_{0}^{1}(\Omega)$ which is a nontrivial solution of \eqref{P}. In fact, if $e_1$ is the positive eigenfunction (normalized in $H_{0}^{1}(\Omega)$) associated to the first eigenvalue $\lambda_1(\alpha)$, then
$$
I(te_1)=\frac{1}{2}-\int_{\Omega}\left[\frac{F(x, te_1)}{(te_1)^{2}}\right]e_1^{2}dx, \ \forall \ t>0.
$$

\medskip

It follows from $(f_1')-(f_{2}')$ and Lebesgue Dominated Convergence Theorem that
\begin{equation}\label{3}
\lim_{t\to 0^{+}}\frac{I(te_1)}{t^{2}}= \frac{1}{2}\left[1-\int_{\Omega}\alpha(x)e_{1}^{2}dx\right]=\frac{1}{2}\left[1-\frac{1}{\lambda_1(\alpha)}\right]<0.
\end{equation}
This shows that there exist $\varepsilon, t_{\ast}>0$ small enough such that
$$
I(u_{\ast})\leq I(te_{1})\leq -\varepsilon t_{\ast}^{2}.
$$
This proves item $(i)$.

\medskip

$(ii)$ Now, observe that for each $u\in \mathcal{S}_{\chi(\alpha)}$ and $t>0$,
$$
\frac{I(tu)}{t^{2}}= \frac{1}{2}-\int_{[u\neq 0]}\left[\frac{F(x, tu)}{(tu)^{2}}\right]u^{2}dx.
$$
It follows from $(f_1')$ and Lebesgue Dominated Convergence Theorem that
\begin{equation}\label{3}
\lim_{t\to 0^{+}}\frac{I(tu)}{t^{2}}= \frac{1}{2}\left[1-\int_{\Omega}\alpha(x)u^{2}dx\right].
\end{equation}

\medskip

Let $dim V_{\lambda_{i}(\alpha)}$ be the dimension of the eigenspace $V_{\lambda_{i}(\alpha)}$ and $\{e_{ij}\}\subset \oplus_{k=1}^{m}V_{\lambda_k(\alpha)}$ the associated orthonormal basis of eigenfunctions. Since $u$ can be written as
$$
u=\sum_{i=1}^{m}\sum_{j=1}^{dim V_{\lambda_{i}(\alpha)}}u_{ij}e_{ij},
$$ 
with
$$
\sum_{i=1}^{m}\sum_{j=1}^{dim V_{\lambda_{i}(\alpha)}}u_{ij}^{2}=1,
$$
we conclude that
\begin{equation}\label{4}
\lim_{t\to 0^{+}}\frac{I(tu)}{t^{2}}= \frac{1}{2}\left[1-\sum_{i=1}^{m}\sum_{j=1}^{dim V_{\lambda_{i}(\alpha)}}u_{ij}^{2}\int_{\Omega}\alpha(x)e_{ij}^{2} dx\right]=\frac{1}{2}\left[1-\sum_{i=1}^{m}\sum_{j=1}^{dim V_{\lambda_{i}(\alpha)}}\frac{u_{ij}^{2}}{\lambda_{i}(\alpha)}\right].
\end{equation}

\medskip

Since $\lambda_{i}(\alpha)\leq \lambda_{m}(\alpha)$ for all $i\in\{1, \ldots, m\}$ and $u\in \mathcal{S}_{\chi(\alpha)}$,
\begin{equation}\label{5}
\lim_{t\to 0^{+}}\frac{I(tu)}{t^{2}}\leq \frac{1}{2}\left[1-\frac{1}{\lambda_{m}(\alpha)}\sum_{i=1}^{m}\sum_{j=1}^{dim V_{\lambda_{i}(\alpha)}}u_{ij}^{2}\right]=\frac{1}{2}\left[1-\frac{1}{\lambda_{m}(\alpha)}\right]<0,
\end{equation}
where the last inequality comes from $(f_2')$. Therefore, there exist $\varepsilon, \delta>0$ such that 
$$
I(tu)=(I(tu)/t^{2})t^{2}\leq -\varepsilon t^{2},
$$
for all $0<t<\delta$ and $u\in \mathcal{S}_{\chi(\alpha)}$. Fixing $0<t_{\ast}<\delta$, we have
$$
\sup_{w\in t_{\ast}\mathcal{S}_{\chi}}I(w)<0.
$$
Since $I$ is coercive, it is standard to prove that it satisfies the $(PS)_{c}$ condition. Finally, as $I$ is an even $C^1$-functional and $I(0)=0$, it follows from Theorem 9.1 in \cite{Rab}, that $I$ has at least $\chi(\alpha)$ pairs of critical points.

\end{proof}


\section{On the assumption \eqref{finito}}\label{sec:assump}

In this section we are interested in providing a concrete example of function $f$ which satisfies hypothesis \eqref{finito} considered in Theorem \ref{teo1}, but does not satisfy assumption $(fF)$. In order to fix some ideas, let us consider $\eta>\lambda_{m}$ arbitrarily fixed, where $\lambda_{m}$ denotes the first eigenvalue of the laplacian operator with Dirichlet boundary condition. Let also $\mathcal{A}$, the open set
$$
\mathcal{A}=\mathcal{A}_{\eta}=\{u\in H_{0}^{1}(\Omega): \|u\|^{2}<\eta\int_{\Omega}u^{2}dx\}.
$$ 
Let $u_{\ast}\in \mathcal{S}_{\chi(\eta)}\cap L^{\infty}(\Omega)\subset \mathcal{S}_{\mathcal{A}}$ and $\theta:=|u_{\ast}|_{\infty}$. We are going to consider the problem
\begin{equation}\label{CP}\tag{CP}
\left \{ \begin{array}{ll}
-\Delta u =  f(u)& \mbox{in $\Omega$,}\\
u=0 & \mbox{on $\partial\Omega$,}
\end{array}\right.
\end{equation}
with $\Omega\subset \R^{N}$, $N\geq 1$, a bounded smooth domain, 
$$
f(t)=\left \{ \begin{array}{ll} t|t| & \mbox{if $|t|\leq \theta$,}\\
\eta\frac{t^{5}}{a+t^{4}} & \mbox{if $|t|>\theta$,}
\end{array}\right.
$$
and  
\begin{equation}\label{conta5}
a=\theta^{3}(\eta-\theta)
\end{equation} 
is such that $f$ is a continuous function (and non-differentiable in $-\theta$ and $\theta$). 

\medskip

Observe that for $\eta$ large enough, we still have $u_{\ast}\in \mathcal{S}_{\mathcal{A}}$ because $\eta_{1}>\eta_{2}>\lambda_1$ implies $\mathcal{A}_{\eta_{2}}\subset\mathcal{A}_{\eta_{1}}$. Moreover, for $\eta$ large enough, $a$ is a positive constant. 

\medskip

Some simple calculations show us that $f$ is an odd function satisfying $(f_{1})-(f_{2})$, with $\alpha(x)=0$, $\eta(x)=\eta$, $\lambda_{1}(\eta-\alpha)=\lambda_1/\eta$ and $\beta(x)=\beta$, where
\begin{eqnarray*}
\beta&=&\lim_{|t|\to\infty}\left[\frac{1}{2}f(t)t-F(t)\right]\\
&=&\frac{\eta}{2}\left(\theta^{2}-\sqrt{a}\arctan\left(\frac{\theta^{2}}{\sqrt{a}}\right)\right)-\frac{\theta^{3}}{3}+\lim_{|t|\to\infty}\left\{\frac{1}{2}\left[\frac{\eta t^{6}}{a+t^{4}}\right]-\left[\frac{\eta}{2}\left(t^{2}-\sqrt{a}\arctan\left(\frac{t^{2}}{\sqrt{a}}\right)\right)\right]\right\}\\
&=&\frac{\eta}{2}\theta^{2}-\frac{\eta\sqrt{a}}{2}\arctan\left(\frac{\theta^{2}}{\sqrt{a}}\right)-\frac{\theta^{3}}{3}+\frac{\pi\eta\sqrt{a}}{4}.
\end{eqnarray*}
Thus, hypothesis \eqref{fF} in \cite{LZ} is not verified. Note that $f(t)/t\to 0$ faster than $t^{2}\to\infty$ as $|t|\to\infty$, that is,
$$
\lim_{|t|\to\infty}[\eta-f(t)/t]t^{2}=\eta\lim_{|t|\to\infty}\left[1-\frac{t^{4}}{a+t^{4}}\right]t^{2}=0.
$$

\medskip

Let $I: H_{0}^{1}(\Omega)\to\R$ be the energy functional of \eqref{CP}. Since $f$ satisfies $(f_{1})-(f_{2})$, Proposition \ref{proposição1} holds for $I$. That is, for each $u\in \mathcal{A}$, there exists a unique $t_{u}>0$ such that $t_{u}u\in\mathcal{N}$, where $\mathcal{N}$ is the Nehari manifold associated to $I$. On the other hand, from definition of $f$, it is clear that, for $\eta$ large and $0<t<1$, 
$$
I(tu_{\ast})= \frac{t^{2}}{2}\|u_{\ast}\|^{2}-\frac{t^{3}}{3}\int_{\Omega}|u_{\ast}|^{3}dx
$$ 
and 
\begin{equation}\label{derivs}
\alpha_{u_{\ast}}'(t)=J'(t u_{\ast})(u_{\ast})=t-\int_{\Omega}f(x, t u_{\ast})u_{\ast} dx=t-t^{2}\int_{\Omega}|u_{\ast}|^{3} dx.
\end{equation}
Since $u_{\ast}\in \mathcal{S}_{\mathcal{A}}$, by H\"{o}lder's inequality, we get
$$
1<\eta\int_{\Omega}u_{\ast}^{2}dx\leq \eta|\Omega|^{1/3}\left(\int_{\Omega}|u_{\ast}|^{3}dx\right)^{2/3}.
$$
Showing that
\begin{equation}\label{conta2}
\int_{\Omega}|u_{\ast}|^{3}dx>\frac{1}{\eta^{3/2}|\Omega|^{1/2}}.
\end{equation}
It follows from \eqref{conta2} that
\begin{equation}\label{derivs2}
0<t_{\ast}=\frac{1}{\int_{\Omega}|u_{\ast}|^{3}dx}<\frac{1}{\eta^{3/2}|\Omega|^{1/2}}<1,
\end{equation}
for $\eta$ large. Therefore, from \eqref{derivs} and \eqref{derivs2}, 
$$
\alpha_{u_{\ast}}'(t_{\ast})=0,
$$
and by uniqueness 
\begin{equation}\label{derivs3}
t_{u_{\ast}}=t_{\ast}.
\end{equation}

\medskip

In order to apply Theorem \ref{teo1} to \eqref{CP}, it is enough to prove that assumption \eqref{finito} holds. In the case of problem \eqref{CP}, it is equivalent to
$$
\frac{\eta}{2}\theta^{2}-\frac{\eta\sqrt{a}}{2}\arctan\left(\frac{\theta^{2}}{\sqrt{a}}\right)-\frac{\theta^{3}}{3}+\frac{\pi\eta\sqrt{a}}{4}>\frac{\eta^{2}\tau_{m}^{N/2}}{2S(\Omega)^{N/2}},
$$
or yet, 
\begin{equation}\label{eita}
S(\Omega)^{N/2}\left[\frac{\theta^2}{\eta}-\frac{\sqrt{a}}{\eta}\arctan\left(\frac{\theta^{2}}{\sqrt{a}}\right)-\frac{2\theta^{3}}{3\eta^2}+\frac{\pi\sqrt{a}}{2\eta}\right]>\tau_{m}^{N/2},
\end{equation}
where $\tau_{m}$ is the number defined in Proposition \ref{proposition2}. In order to prove that \eqref{eita} holds, it is enough to show that
\begin{equation}\label{conta3}
S(\Omega)^{N/2}\left[\frac{\theta^2}{\eta}-\frac{\sqrt{a}}{\eta}\arctan\left(\frac{\theta^{2}}{\sqrt{a}}\right)-\frac{2\theta^{3}}{3\eta^2}+\frac{\pi\sqrt{a}}{2\eta}\right]>t_{u_{\ast}}^{N/2}.
\end{equation}
However, it is a straightforward consequence of \eqref{conta5} that, for $\eta$ large enough, we have
\begin{equation}\label{conta8}
\frac{\pi\sqrt{a}}{2\eta}-\frac{\sqrt{a}}{\eta}\arctan\left(\frac{\theta^{2}}{\sqrt{a}}\right)-\frac{2\theta^{3}}{3\eta^2}\geq \frac{C_{1}}{\eta^{1/2}}- \frac{C_{2}}{\eta^{1/2}}\arctan\left(\frac{C_3}{\eta^{1/2}}\right)- \frac{C_{4}}{\eta^{2}}\geq \frac{C_5}{\eta^{1/2}},
\end{equation}
for some positive constants $C_1, C_2, C_3, C_4$ and $C_5$. It follows from \eqref{conta3} and \eqref{conta8} that, for $\eta$ large enough
\begin{equation}\label{conta9}
S(\Omega)^{N/2}\left[\frac{\theta^2}{\eta}-\frac{\sqrt{a}}{\eta}\arctan\left(\frac{\theta^{2}}{\sqrt{a}}\right)-\frac{2\theta^{3}}{3\eta^2}+\frac{\pi\sqrt{a}}{2\eta}\right]\geq \frac{C_5}{\eta^{1/2}}.
\end{equation}
Thus, from \eqref{derivs2}, \eqref{derivs3} and \eqref{conta9}, to prove \eqref{eita}, it is enough to show that
$$
 \frac{C_5}{\eta^{1/2}}>\frac{1}{\eta^{3N/4}|\Omega|^{N/4}},
$$
for $\eta$ large. Since $N\geq 1$, last equality ever occurs. Showing that $f$ satisfies $(\beta)$. Consequently, Theorem \ref{teo1} can be applied to conclude the multiplicity of solutions for problem \eqref{CP}.


\end{document}